\newtheorem{prop}{Proposition}
\newtheorem{theo}[prop]{Theorem}
\newtheorem{lem}[prop]{Lemma}
\newtheorem{cor}[prop]{Corollary}
\theoremstyle{definition}
\newtheorem{defin}[prop]{Definition}
\newtheorem*{remark}{Remark}
\def \be{\begin{equation*}}
\def \ee{\end{equation*}}
\def \tOmega{\widetilde\Omega}
\def \M{M_{z,z',\theta}}
\def \tM{\widetilde {M}_{z,z',\theta}}
\def \L{\mathfrak L}
\def \ve{\varepsilon}
\def \({\left(}
\def \){\right)}
\newcommand\restr[2]{{
  \left.\kern-\nulldelimiterspace 
  #1 
  \vphantom{\big|} 
  \right|_{#2} 
  }}
\title[Transition functions of processes on the Thoma simplex]{Transition functions of diffusion processes on the Thoma simplex}
\author[]{Sergei Korotkikh}
\date{}
\begin{document}

\maketitle

\begin{abstract}
The paper deals with a three-dimensional family of diffusion processes on an infinite-dimensional simplex. These processes were constructed by Borodin and Olshanski (2009; 2010), and they include, as limit objects, Ethier-Kurtz's infinitely-many-neutral-allels diffusion model (1981) and its extension found by Petrov (2009). 

Each process X from our family possesses a symmetrising measure M, called the z-measure. Our main result is that the transition function of X has continuous density with respect to M.  This is a generalization of earlier results due to Ethier (1992) and to Feng, Sun, Wang, and Xu (2011). Our proof substantially uses a special basis in the algebra of symmetric functions related to Laguerre polynomials. 
\end{abstract}

\begin{section}{Introduction}

Ethier and Kurtz \cite{EK} constructed a family of diffusion processes on the Kingman simplex

\be
\overline{\nabla}_\infty=\left\{\alpha=(\alpha_n)\ \bigg |\ \alpha_1\geq\alpha_2\geq\dots\geq0, \sum_i \alpha_i\leq1\right\},
\ee
depending on a parameter $\tau>0$ and preserving the Poisson-Dirichlet distributions $PD(\tau)$ \cite{Kin} (the Poisson-Dirichlet parameter is usually denoted by $\theta$ but we reserve this symbol for another purpose).  Ethier \cite{Et} showed that the  transition function of the Ethier-Kurtz diffusion has a  continuous density with respect to $PD(\tau)$.

Petrov \cite{Pe1} discovered a broader family of diffusions on $\overline{\nabla}_\infty$, which depend on two parameters $(a,\tau)$ and are related to the two-parameter Poisson-Dirichlet distributions $PD(a, \tau)$ \cite{Pit}. An extension of Ethier's theorem from \cite{Et} to Petrov's diffusions is contained in the work \cite{FSWX} by Feng, Sun, Wang, and Xu. 

The present paper deals with an even broader family of infinite-dimensional diffusion processes, constructed by Olshanski \cite{olshanski}, and earlier (in a special case) by Borodin and Olshanski in \cite{BoOl2}. These processes depend on a triplet of parameters $(z,z',\theta)$. Their state space is the Thoma simplex 

\be
\Omega=\left\{(\alpha,\beta)\in\mathbb R^\infty_{\geq0}\times\mathbb R^\infty_{\geq 0}\ \bigg|\ \alpha_1\geq\alpha_2\geq\dots; \beta_1\geq\beta_2\geq\dots; \sum_i\alpha_i+\sum_j\beta_j\leq1\right\}.
\ee

The (pre)generators of the processes are defined on the algebra $\mathbb R[p^\circ_2,p^\circ_3,\dots]$, where
\begin{equation}
\label{moment}
p^\circ_k=\sum_{i=1}^\infty\alpha_i^{k}+(-\theta)^{k-1}\sum_{j=1}^{\infty}\beta_j^k, 
\end{equation}
and are given by 

\begin{multline}
\label{Aexpressionzz}
A_{z,z',\theta}= \sum_{i,j\geq2}ij(p^\circ_{i+j-1}-p^\circ_ip^\circ_j)\frac{\partial^2}{\partial p^\circ_i\partial p^\circ_j}+\sum_{i\geq2}[(1-\theta)i(i-1)p^\circ_{i-1}\\+(z+z')ip^\circ_{i-1}-i(i-1)p^\circ_i-i\theta^{-1}zz'p^\circ_i]\frac{\partial}{\partial p^\circ_i}+\theta\sum_{i,j\geq1}(i+j+1)p^\circ_ip^\circ_j\frac{\partial}{\partial p^\circ_{i+j+1}}.
\end{multline}

In the limit as $\theta\to0$, the $\beta$-coordinates disappear from \eqref{moment}, so that the Thoma simplex turns into the Kingman simplex. Moreover, by letting  the parameters $(z,z')$ vary together with $\theta$ in an appropriate way one can degenerate the expression \eqref{Aexpressionzz} into the generator of Petrov's diffusion. In particular, in this way one can get the Ethier-Kurtz generator. 

Our main result is Theorem \ref{result}. It provides an extension of Ethier's theorem to the 3-parameter family of diffusions with generators \eqref{Aexpressionzz}.  We derive an explicit expression for the transition function, which shows, in particular that it has a continuous density with respect to the stationary distribution (the latter is called the z-measure with Jack's parameter).  As shown in a recent work by Olshanski \cite{support}, our result implies that the topological support of the z-measures is the whole Thoma simplex, leading to possible applications of independent interest (for example, see \cite{GoRo}).

Our approach uses ideas of \cite{Et} and \cite{FSWX} as well as some new arguments. The central role is played by the Laguerre symmetric functions with Jack's parameter $\theta$ which form bases in the algebra of symmetric functions.

The structure of this paper is as follows. In the beginning we recall several facts about Jack symmetric functions and the process with Jack parameter. In order to work with the z-measures, we introduce in section \ref{LaguerreS}  a basis of Laguerre symmetric functions. In section \ref{EigenExpansion} we describe the eigenfunction expansion of the transition density (Theorem \ref{result}), then we give some applications of our result.

I would like to express my gratitude to Grigori Olshanski for suggestions and remarks. This work has been funded by the Russian Academic Excellence Project '5-100'.
\end{section}

\begin{section}{Preliminaries on Jack's symmetric functions}
Here we will give a short reminder about Jack's symmetric functions, see \cite{macdonald} for further details. Let $\Lambda^{(N)}=\mathbb R[x_1, \dots, x_N]^{S_N}$ denote the graded algebra of symmetric polynomials in $x_1, \dots, x_N$. We define a projection map $\iota_N:\Lambda^{(N)}\to\Lambda^{(N-1)}$ which acts by setting $x_N=0$. The graded algebra of \emph{symmetric functions} $\Lambda$ is the projective limit of the graded algebras $\Lambda^{(N)}$ with the projection maps $\iota_N$. In other words, $\Lambda$ consists of symmetric formal power series in $x_1, x_2, \dots$ of finite degree. The graded degree $k$ component of $\Lambda$ is denoted by $\Lambda_k$. Note that we have a natural truncation map $\pi_N:\Lambda\to\Lambda^{(N)}$ setting $x_{N+1}=x_{N+2}=\dots=0$.

Let $\mathbb Y_n$ denote the set formed by partitions $\lambda$ of $n$, that is, the set of sequences of nonnegative integers $\lambda_1\geq\lambda_2\geq\dots$  s.t. $|\lambda|:=\sum_{i} \lambda_i=n$, and let $\mathbb Y=\bigsqcup_{n\geq 0}\mathbb Y_n$ denote the set of all partitions. Alternatively, we can view a partition $\lambda$ as a collection of boxes on a plane called \emph{Young diagram}, with row lengths $\lambda_i$. For a pair of partitions $\mu$ and $\lambda$ we write $\mu\subset \lambda$ if $\mu_i\leq \lambda_i$ for every $i$, and $\lambda/\mu$ denotes a skew diagram in the plane obtained by removing boxes of $\mu$ from the Young diagram $\lambda$ (see Figure \ref{youngdiagram}).

\begin{figure}
\begin{ytableau}
\ & & & & \\
 & & \\
 & 
\end{ytableau}
\hspace{1 cm}
\begin{ytableau}
\none&\none& & & \\
\none& & \\
 & 
\end{ytableau}
\caption{The Young diagram corresponding to $(5,3,2)$ (left) and the skew Young diagram corresponding to $(5,3,2)/(2,1)$ (right)}
\label{youngdiagram}
\end{figure}

The algebra $\Lambda$ has several distinguished bases enumerated by partitions. A basic example is the basis of monomial symmetric functions $m_\lambda$ given by
\be
m_\lambda:=\sum_{\alpha}\prod_{i=1}^{\infty}x_i^{\alpha_i},
\ee 
where the sum is over all distinct rearrangements $\alpha=(\alpha_1,\alpha_2,\dots)$ of the sequence $(\lambda_1, \lambda_2, \dots)$. Another basis $\{p_\lambda\}_{\lambda\in\mathbb Y}$ is generated by the power sums $p_k:=\sum_i x_i^k$ and is defined by
\be
p_\lambda:=\prod_{i=1}^{\infty} p_{\lambda_i}.
\ee
Here we set $p_0=1$. Note that $p_1, p_2, \dots$ are algebraically independent generators of $\Lambda$, hence $\Lambda\cong\mathbb R[p_1, p_2,\dots]$ (see \cite[I]{macdonald}).

In this work we will use a more complicated basis, or, more precisely, a family of bases $\{P_\lambda(x;\theta)\}_{\lambda\in\mathbb Y}$ depending on a positive real parameter $\theta$. The functions from this family are called Jack symmetric functions and they are uniquely characterised by the following properties:

1) $P_\lambda(x;\theta)=m_\lambda(x)+\sum_{\mu\leq\lambda}a_{\lambda\mu}m_\mu(x)$ where the sum is over partitions $\mu$ less than $\lambda$ in the lexicographical order.
 
2) $P_\lambda(x;\theta)$ are orthogonal with respect to the scalar product $\langle\cdot,\cdot \rangle_\theta$ on $\Lambda$ defined by
\be
\langle p_\lambda,p_\mu\rangle_\theta=\delta_{\lambda,\mu}z_\lambda\theta^{-l(\lambda)}.
\ee
Here we use notations from \cite{macdonald} where for a partition $\lambda$ we define $l(\lambda)$ as the number of nonzero parts in the partition $\lambda$, $m_i(\lambda)$ as the number of parts equal to $i$, and $z_\lambda$ is given by
\be
z_\lambda:=\prod_i i^{m_i(\lambda)} \cdot m_i(\lambda)!.
\ee

These properties uniquely define a system of homogenous symmetric functions in $\Lambda$. Moreover, for a fixed $\theta$ the functions $P_\lambda$ with $|\lambda|=n$ form a basis of $\Lambda_n$.

Let
\be
b^{(\theta^{-1})}_\lambda:=\langle P_\lambda(x;\theta),P_\lambda(x;\theta)\rangle_\theta^{-1},\quad Q_\lambda(x;\theta):=b_\mu^{(\theta^{-1})}P_\lambda(x;\theta),
\ee
so $\{Q_\lambda\}$ is the basis dual to $\{P_\lambda\}$, that is $\langle P_\lambda(x;\theta), Q_\mu(x;\theta)\rangle_{\theta^{-1}}=\delta_{\lambda,\mu}$.

The \emph{$\theta$-dimension} $\dim_\theta(\mu,\lambda)$ is defined as the coefficient in the expansion
\begin{equation}
\label{dimexp}
p_1^{n-|\mu|}P_{\mu}(x;\theta)=\sum_{|\lambda|=n}\dim_\theta(\mu,\lambda)P_{\lambda}(x;\theta).
\end{equation}
By duality this is equivalent to 
\begin{equation}
\label{dimtheta}
\dim_\theta(\mu,\lambda)=\langle p_1^{|\lambda|-|\mu|}P_\mu,Q_\lambda\rangle_\theta.
\end{equation}
When $\mu=\varnothing$ we simply write $\dim_\theta(\lambda)$. 
 
For further use we also recall several combinatorial notations. The \emph{Pochhammer symbol} is denoted by $(t)_n$ and is defined by
\be
(t)_n:=t(t+1)\dots(t+n-1).
\ee 

The $\theta$-content of a box $(i,j)$ is defined by
\be
c_\theta(i,j):=(j-1)-\theta(i-1).
\ee
We generalise the Pochhammer symbol defining it for a skew diagram $\lambda/\mu$ by 
\be
(z)_{\lambda/\mu,\theta}:=\prod_{(i,j)\in\lambda/\mu}(z+c_\theta(i,j)),
\ee
where the product is over boxes $(i,j)\in\lambda/\mu$ (that is, $\mu_i<j\leq \lambda_i$). For $\mu=\varnothing$ we will use the notation $(z)_{\lambda,\theta}:=(z)_{\lambda/\varnothing,\theta}$

\begin{remark}
The parameter $\theta$ is equal to $\alpha^{-1}$ in Macdonald's notation, see~\cite[VI]{macdonald} for further details on Jack symmetric functions.
\end{remark}
\end{section}

\begin{section}{Diffusion processes on Thoma simplex}
\label{thomaS}
\emph{The Thoma simplex} $\Omega$ is defined as the subspace of $\mathbb R^\infty_{\geq0}\times\mathbb R^\infty_{\geq0}$ formed by pairs $\omega=(\alpha,\beta)$ such that 
\be
\alpha_1\geq\alpha_2\geq\dots\geq0,\quad\beta_1\geq\beta_2\geq\dots\geq0,
\ee
\be
\sum_{i=1}^\infty\alpha_i+\sum_{j=1}^\infty\beta_j\leq 1,
\ee
with the topology induced from the product topology on $\mathbb R^\infty_{\geq0}\times\mathbb R^\infty_{\geq0}$.

For the rest of this section fix $\theta>0$. Let $C(\Omega)$ denote the space of continuous functions on $\Omega$ with supremum norm. We define an algebra morphism $\Lambda\to C(\Omega)$ by setting $p_1\mapsto 1$ and 
\be
p_k\mapsto \sum_{i=1}^\infty\alpha_i^{k} +(-\theta)^{k-1}\sum_{j=1}^\infty\beta_j^{k},\quad k\geq 2.
\ee 
The image of $f\in\Lambda$ in $C(\Omega)$ is denoted by $f^\circ$ and the resulting subalgebra in $C(\Omega)$ is denoted by $\Lambda^\circ$. The functions $p_n^\circ$ for $n\geq2$ are algebraically independent (see \cite[9.3]{olshanski}), hence $\Lambda^\circ=\mathbb C[p_2^\circ, \dots] \cong\Lambda/{(p_1-1)}$. Note that the grading $\Lambda=\bigoplus_{n\geq0}\Lambda_n$ induces a filtration on $\Lambda^\circ$:
\be
\Lambda^{\circ}_{\leq n}=\(\bigoplus_{i=1}^n \Lambda_i\)^\circ.
\ee 

We consider a family of Markov processes on $\Omega$ parametrized by triples $(z,z',\theta)$ such that one of the following conditions holds:

i) $z\in \mathbb C\backslash\mathbb R$ and $\overline z=z'$

ii) $\theta$ is rational and both $z$ and $z'$ are real numbers lying in one of the open intervals between two consecutive numbers from the lattice $\mathbb Z+\theta\mathbb Z$.

In the first (resp. second) case we say that $(z, z')$ belongs to the \emph{principal series} (resp. to the \emph{complementary series}). Later we will also need the \emph{degenerate series} defined as pairs $(z,z')=(N\theta, c+(N-1)\theta)$ for $N\in\mathbb Z_{>0}$ and $c>0$.

Define an operator $A_{z,z',\theta}$ on $\Lambda^\circ$ by 

\begin{equation*}
\begin{aligned}
A_{z,z',\theta}= &\sum_{i,j\geq2}ij(p^\circ_{i+j-1}-p^\circ_ip^\circ_j)\frac{\partial^2}{\partial p^\circ_i\partial p^\circ_j}+\sum_{i\geq2}[(1-\theta)i(i-1)p^\circ_{i-1}\\&+(z+z')ip^\circ_{i-1}-i(i-1)p_i^\circ-i\theta^{-1}zz'p_i^\circ]\frac{\partial}{\partial p_i^\circ}\\&+\theta\sum_{i,j\geq1}(i+j+1)p^\circ_ip^\circ_j\frac{\partial}{\partial p^\circ_{i+j+1}},
\end{aligned}
\end{equation*}
where $\frac{\partial}{\partial p_i^\circ}$ denotes a formal differentiation in $\Lambda^\circ\cong\mathbb R[p_2^\circ, p_3^\circ,\dots]$. This operator was introduced and studied in \cite{olshanski} and here we will recall its key properties. 
\begin{theo}[{\cite[Theorems 9.6, 9.10]{olshanski}}]
1) The operator $A_{z,z'\theta}$ is closable and its closure serves as a generator of a Feller Markov process $X_{z,z',\theta}$ on $\Omega$.

2) There exists a unique invariant probability measure for $X_{z,z',\theta}$.

3) Moreover, this measure is the symmetrizing measure. 
\end{theo}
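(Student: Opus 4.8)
The plan is to verify the hypotheses of the Hille--Yosida--Ray theorem: a densely defined operator on $C(\Omega)$ is closable, with closure generating a conservative Feller semigroup, exactly when it annihilates constants, satisfies the positive maximum principle, and has dense range for $\lambda-A_{z,z',\theta}$ at some $\lambda>0$. Density of the domain $\Lambda^\circ$ in $C(\Omega)$ follows from the Stone--Weierstrass theorem, since $\Lambda^\circ$ contains the constants and the moment coordinates $p_2^\circ,p_3^\circ,\dots$ separate the points of the compact space $\Omega$ (a configuration $\omega=(\alpha,\beta)$ is recovered from its moments). For the range condition I would exploit the algebraic structure of $A_{z,z',\theta}$: inspecting its three summands shows that each decreases the filtration degree by $0$ or $1$, so $A_{z,z',\theta}$ maps every finite-dimensional piece $\Lambda^\circ_{\leq n}$ into itself and acts there as a lower-triangular operator with respect to the grading. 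Its degree-preserving part is diagonalised by the Jack functions $P_\lambda^\circ$, so the eigenvalues $e_\lambda$ of the full triangular operator are read off from this part; they are given by an explicit formula in $(z,z',\theta)$, are real and nonpositive throughout the admissible range, and vanish only for $\lambda=\varnothing$. The associated eigenfunctions are triangular modifications of the $P_\lambda^\circ$ (the Laguerre symmetric functions of the later sections). Hence for every $\lambda>0$ the map $\lambda-A_{z,z',\theta}$ is a bijection of each $\Lambda^\circ_{\leq n}$, and since $\bigcup_n\Lambda^\circ_{\leq n}=\Lambda^\circ$ is dense, the range condition holds.

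The analytic core, and the step I expect to be the main obstacle, is the positive maximum principle: whenever $f\in\Lambda^\circ$ attains a nonnegative maximum at a point $\omega_0\in\Omega$, one must show $(A_{z,z',\theta}f)(\omega_0)\le 0$. The difficulty is that $\Omega$ is infinite-dimensional with a complicated boundary, so no classical ellipticity criterion applies directly. My approach would be to realise $A_{z,z',\theta}$ as a scaling limit of the generators of the up--down Markov chains on Young diagrams of size $n$ that preserve the z-measures: these are genuine finite-state Markov generators, so they automatically satisfy the positive maximum principle and are conservative. Since every $f\in\Lambda^\circ$ lies in some $\Lambda^\circ_{\leq n}$, where the action of $A_{z,z',\theta}$ is finite-dimensional and explicit, one can check that the rescaled discrete generators converge to $A_{z,z',\theta}$ on this common core. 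The Trotter--Kurtz semigroup convergence theorem then transfers both the positive maximum principle and conservativeness to the limit, producing the Feller process $X_{z,z',\theta}$ whose semigroup $P_t=e^{tA_{z,z',\theta}}$ is computed blockwise on the spaces $\Lambda^\circ_{\leq n}$.

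For the remaining parts I would first prove that $A_{z,z',\theta}$ is symmetric with respect to the z-measure $\M$, that is $\int (A_{z,z',\theta}f)\,g\,d\M=\int f\,(A_{z,z',\theta}g)\,d\M$ for all $f,g\in\Lambda^\circ$; equivalently, the eigenfunctions of $A_{z,z',\theta}$ are mutually orthogonal in $L^2(\Omega,\M)$ and the eigenvalues are real, which is exactly the spectral picture of the first paragraph. Symmetry makes $\M$ a symmetrizing measure and forces its invariance: taking $g\equiv 1$ together with $A_{z,z',\theta}1=0$ gives $\int A_{z,z',\theta}f\,d\M=0$ for every $f\in\Lambda^\circ$, hence $\int P_tf\,d\M=\int f\,d\M$, which yields existence in part (2) and all of part (3). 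Uniqueness follows from the spectral description: the eigenvalue $0$ is simple, with the constants as its only eigenfunctions, so the semigroup is ergodic and no second invariant probability measure can exist. Once the positive maximum principle of the second paragraph is secured, the content of parts (2) and (3) is thus comparatively routine spectral theory.
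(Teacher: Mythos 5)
The paper does not prove this theorem at all: it is imported verbatim from Olshanski's work (Theorems 9.6 and 9.10 of \cite{olshanski}, with the case $\theta=1$ going back to \cite{BoOl2}), so there is no in-paper argument to compare yours against. Judged against the proof in those references, your outline reconstructs the correct strategy: the generator is obtained as a scaling limit of the up--down Markov chains on Young diagrams of size $n$ that preserve the z-measures $M_n(\lambda)$, the range condition for the Hille--Yosida--Ray theorem comes from the fact that $A_{z,z',\theta}$ preserves each finite-dimensional piece $\Lambda^\circ_{\leq n}$ and acts there triangularly with nonpositive eigenvalues $-m(m-1+zz'\theta^{-1})$, and dissipativity/positivity of the limit semigroup is inherited from the finite chains via the Trotter--Kurtz approximation theorem rather than by a direct verification of the positive maximum principle on $\Omega$. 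So the architecture is right.

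Two substantive gaps remain. First, the sentence ``one can check that the rescaled discrete generators converge to $A_{z,z',\theta}$ on this common core'' conceals the technical heart of the theorem: one must construct the up--down chains from the Jack branching rule, compute their action on $\Lambda^\circ$ in terms of (shifted) Jack symmetric functions, and show that after rescaling the result is exactly the second-order operator \eqref{Aexpressionzz}; this occupies most of \cite{olshanski} and cannot be waved through. Second, your derivation of part (3) is circular as stated: you propose to prove symmetry of $A_{z,z',\theta}$ in $L^2(\Omega,\M)$ by appealing to orthogonality of its eigenfunctions, but that orthogonality is equivalent to the symmetry you are trying to establish and does not follow from the purely algebraic diagonalization on $\Lambda^\circ$. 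In the cited proofs the symmetrizing property is instead inherited from the reversibility of the finite chains with respect to $M_n$ together with the weak convergence $M_n\to\M$ (or, alternatively, verified directly from the moment identity \eqref{zmeasurechar}). A minor imprecision: the eigenfunctions of $A_{z,z',\theta}$ on $\Omega$ are not literally the Laguerre symmetric functions $\L_\lambda$ of Section \ref{LaguerreS}, which are eigenfunctions of the lifted operator on the Thoma cone $\tOmega$; the relation between the two requires the identification $\tOmega\setminus\{0\}\cong\Omega\times\mathbb R_{>0}$ and the factorization $\tM=\M\otimes\gamma_{zz'\theta^{-1}}$. With those three points repaired, your uniqueness argument for the invariant measure (simplicity of the eigenvalue $0$ plus density of $\Lambda^\circ$ in $C(\Omega)$) is sound.
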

The invariant measure mentioned in the theorem is denoted by $\M$ and is called \emph{z-measure} on the Thoma simplex. For any $\lambda$ It satisfies the following relation 
\begin{equation}
\label{zmeasurechar}
\int_\Omega  Q^\circ_\lambda(\omega;\theta)\M(d\omega)=\frac{\dim_\theta(\lambda)(z)_\lambda(z')_\lambda}{|\lambda|!(\theta^{-1}zz')_{|\lambda|}}.
\end{equation}
Moreover, the following theorem shows that these relations characterise $\M$.   

\begin{theo}[{\cite[Theorem B]{KOO}}]
The equation
\be
M(\lambda)=\dim_\theta(\lambda)\int_\Omega P_\lambda^\circ(\omega;\theta)\mathcal M(d\omega)
\ee
gives a one-to-one correspondence between probability measures $\mathcal M$ on $\Omega$ and nonnegative functions $M(\lambda)$ on $\mathbb Y$ such that $M(\varnothing)=1$ and 
\be
M(\lambda)=\sum_{\substack{\lambda\subset \mu\\ |\mu|=|\lambda|+1}}M(\mu)\frac{\dim_\theta(\lambda)\dim_\theta(\lambda,\mu)}{\dim_\theta(\mu)}.
\ee
\end{theo}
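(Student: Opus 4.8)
The plan is to establish the two directions of the correspondence together with injectivity and surjectivity, organising everything around the branching rule for Jack functions and the boundary theory of the Young graph. Throughout write $\mathbb Y_n$ for the partitions of $n$ and, given a probability measure $\mathcal M$, set $M(\lambda)=\dim_\theta(\lambda)\int_\Omega P_\lambda^\circ(\omega;\theta)\,\mathcal M(d\omega)$.

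First I would check that the forward map lands in the stated set. Specialising \eqref{dimexp} through the morphism $\Lambda\to C(\Omega)$ and using $p_1^\circ=1$ gives, in the case $n=|\lambda|+1$, the one-step branching relation
\be
P_\lambda^\circ=\sum_{\lambda\nearrow\mu}\dim_\theta(\lambda,\mu)P_\mu^\circ.
\ee
Integrating this against $\mathcal M$, multiplying by $\dim_\theta(\lambda)$, and substituting $\int_\Omega P_\mu^\circ\,\mathcal M(d\omega)=M(\mu)/\dim_\theta(\mu)$ yields exactly the coherence relation. Nonnegativity of $M(\lambda)$ follows from the positivity of $P_\lambda^\circ$ on $\Omega$, while $M(\varnothing)=1$ is immediate from $P_\varnothing=1$. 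Taking $\mu=\varnothing$ in \eqref{dimexp} gives $\sum_{|\lambda|=n}\dim_\theta(\lambda)P_\lambda^\circ=(p_1^\circ)^n=1$, so $\sum_{|\lambda|=n}M(\lambda)=1$; conversely the coherence relation together with the identity $\dim_\theta(\mu)=\sum_{\lambda\nearrow\mu}\dim_\theta(\lambda)\dim_\theta(\lambda,\mu)$ (a consequence of \eqref{dimexp}) shows that $\sum_{|\lambda|=n}M(\lambda)$ is independent of $n$ for any admissible $M$. In particular $0\le M(\lambda)\le 1$ and each restriction $M|_{\mathbb Y_n}$ is a probability measure.

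Injectivity I would deduce from density. The subalgebra $\Lambda^\circ\subset C(\Omega)$ is generated by the continuous functions $p_n^\circ$, contains the constants and separates points of the compact space $\Omega$, so by Stone--Weierstrass it is dense in $C(\Omega)$. As the $P_\lambda^\circ$ form a linear basis of $\Lambda^\circ$, the numbers $\int_\Omega P_\lambda^\circ\,\mathcal M(d\omega)$ determine all integrals $\int_\Omega f\,\mathcal M(d\omega)$ with $f\in\Lambda^\circ$, hence $\mathcal M$ itself; thus $\mathcal M\mapsto M$ is injective.

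The substance of the theorem is surjectivity, and this is where I expect the main obstacle. Rewriting the coherence relation as $M(\lambda)=\sum_{\lambda\nearrow\mu}M(\mu)\,p^{\downarrow}(\mu,\lambda)$ with $p^{\downarrow}(\mu,\lambda)=\dim_\theta(\lambda)\dim_\theta(\lambda,\mu)/\dim_\theta(\mu)$, the dimension identity shows $\sum_{\lambda\nearrow\mu}p^{\downarrow}(\mu,\lambda)=1$, so the floor measures $M|_{\mathbb Y_n}$ form a projective system for these stochastic cotransition kernels. The set $\mathcal K$ of admissible $M$ is convex and, being a closed subset of $[0,1]^{\mathbb Y}$, compact in the topology of pointwise convergence; by Choquet's theorem every $M\in\mathcal K$ is the barycentre of a probability measure on the extreme points of $\mathcal K$. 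The genuinely hard step is the identification $\mathrm{ex}\,\mathcal K=\{M_\omega:\omega\in\Omega\}$, where $M_\omega(\lambda)=\dim_\theta(\lambda)P_\lambda^\circ(\omega;\theta)$, together with the fact that $\omega\mapsto M_\omega$ is a homeomorphism. This is a Martin boundary computation via the Vershik--Kerov ergodic method: one shows that the normalised relative dimensions $\dim_\theta(\mu,\lambda(n))/\dim_\theta(\lambda(n))$ converge to $P_\mu^\circ(\omega)$ along any sequence $\lambda(n)\in\mathbb Y_n$ approaching $\omega$, which both pins down the extreme harmonic functions as the $M_\omega$ and shows the boundary is precisely $\Omega$. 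The analytic core is this asymptotics of relative $\theta$-dimensions (equivalently, of normalised Jack characters), and it is the part I expect to require the most work. Granting it, transporting the Choquet measure through $\omega\mapsto M_\omega$ produces a probability measure $\mathcal M$ on $\Omega$ with $M(\lambda)=\dim_\theta(\lambda)\int_\Omega P_\lambda^\circ\,\mathcal M(d\omega)$, completing the correspondence.
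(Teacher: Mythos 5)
First, a point of order: the paper does not prove this statement at all. It is imported verbatim from Kerov--Okounkov--Olshanski \cite{KOO} (Theorem B there) and used as a black box; its only role in the present paper is to deduce, by taking $\mathcal M=\delta_\omega$, that the functions $P_\lambda^\circ$ are nonnegative on $\Omega$. So there is no internal proof to compare yours against; what you have written is an outline of the proof that lives in \cite{KOO}.

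As an outline it identifies the correct architecture, but it is not a proof, and you concede this yourself: the entire substance of the theorem is the step you label ``genuinely hard'' and then grant, namely the identification of the extreme points of the convex set $\mathcal K$ of coherent systems with the family $M_\omega(\lambda)=\dim_\theta(\lambda)P_\lambda^\circ(\omega;\theta)$, $\omega\in\Omega$, together with the homeomorphism claim. Everything you actually carry out (coherence of the forward image via the one-step branching of \eqref{dimexp}, injectivity via Stone--Weierstrass, the Choquet compactness setup) is routine bookkeeping; the boundary computation, which in \cite{KOO} rests on the binomial formula for shifted Jack polynomials of Okounkov--Olshanski \cite{OkOl} and a Vershik--Kerov type limit of the ratios $\dim_\theta(\mu,\lambda(n))/\dim_\theta(\lambda(n))$, is where all the work lives, and none of it appears here. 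There is also a circularity in your ``easy'' direction: you invoke the nonnegativity of $P_\lambda^\circ$ on $\Omega$ to show that the forward map lands in the set of nonnegative functions, but in this paper (and in \cite{KOO}) that nonnegativity is itself a corollary of the theorem -- it emerges from the boundary identification, the $M_\omega$ being limits of manifestly nonnegative ratios of dimensions. For general $\theta$ the positivity of the super-specialisation of $P_\lambda$ (with the alternating signs on the $\beta$-coordinates) is not an a priori fact, so as organised your first paragraph already consumes the conclusion of the step you postpone. The proposal is a faithful roadmap to \cite{KOO}, but the theorem is not proved by it.
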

In particular, for every probability measure $\mathcal M$ on $\Omega$ we have
\be
\int_\Omega P^\circ_\lambda(\omega;\theta)\mathcal{M}(d\omega)\geq0
\ee
because $\dim_\theta(\lambda)>0$. Hence the functions $P^\circ_\lambda$ are nonnegative on $\Omega$. 
\end{section}

\begin{section}{Laguerre symmetric functions}
\label{LaguerreS}

In the previous section we have introduced measures $\M$, which play an important role in our work. In this section we will describe our main approach to working with these measures.

The classical Laguerre polynomials are eigenfunctions of the differential operator

\be
D^{(L)}=x\frac{d^2}{dx^2}+(c-x)\frac{d}{dx}.
\ee

These polynomials are orthogonal in $L^2(\mathbb R_{\geq0}, \gamma_c)$, where $\gamma_{c}$ is the gamma distribution defined by
\be
\gamma_{c}(dr)=\frac{1}{\Gamma(c)}r^{c-1}e^{-r}dr.
\ee  
There is an explicit formula for the Laguerre polynomials:
\be
L_n^c=\frac{c}{n!}\sum_{j=0}^n\binom{n}{j}\frac{(-x)^j}{(c)_j}.
\ee

Fix $\theta>0$. The \emph{generalised Laguerre polynomials} $L_\lambda^{c,N,\theta}(x_1,\dots, x_N)$ are symmetric polynomial eigenfunctions of $N$-variable generalization of $D^{(L)}$, given by
\be
D^{(L)}_N=\sum_{j=1}^N\left(x_j\frac{\partial^2}{\partial x_j^2} +(c-x_j)\frac{\partial}{\partial x_j} +2\theta\sum_{k\neq j}\frac{x_j}{x_j-x_k}\frac{\partial}{\partial x_j} \right).
\ee

They are described in \cite{BaFo} and we state several properties proved there. Recall that $P_\lambda$ and $Q_\lambda$ are the Jack symmetric functions, which can be viewed as symmetric polynomials in $N$ variables via the truncation map $\pi_N$.

\begin{prop}[{\cite[Proposition 4.3]{BaFo}}]
\label{Lagpolex}
We have
\be
L_\lambda^{c,N,\theta}(x)=\sum_{\mu\subseteq\lambda}(-1)^{|\lambda|-|\mu|}\frac{\dim_\theta(\mu,\lambda)}{(|\lambda|-|\mu|)!}(N\theta)_{\lambda/\mu,\theta}(c+(N-1)\theta)_{\lambda/\mu,\theta}Q_\mu(x;\theta),
\ee
\be
Q_\lambda(x;\theta)=\sum_{\mu\subseteq\lambda}\frac{\dim_\theta(\mu,\lambda)}{(|\lambda|-|\mu|)!}(N\theta)_{\lambda/\mu,\theta}(c+(N-1)\theta)_{\lambda/\mu,\theta}L_\mu^{c,N,\theta}(x),
\ee
where $x$ stands for $x_1,\dots, x_N$.
\end{prop}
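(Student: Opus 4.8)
The plan is to characterize $L_\lambda^{c,N,\theta}$ intrinsically and then read off the two expansions from that characterization. First I would observe that, counting degrees with $\deg x_j=1$, every summand of $D^{(L)}_N$ except $-\sum_j x_j\frac{\partial}{\partial x_j}$ strictly lowers the total degree: the terms $x_j\partial_j^2$ and $c\,\partial_j$ drop it by one, and each interaction term $2\theta\frac{x_j}{x_j-x_k}\partial_j$, applied to a symmetric polynomial, also drops it by one. Hence $D^{(L)}_N=-\hat d+\Psi$, where $\hat d$ is the Euler (degree) operator and $\Psi$ is degree-lowering. On a symmetric eigenfunction whose top homogeneous component has degree $|\lambda|$ the eigenvalue is therefore forced to equal $-|\lambda|$, and $L_\lambda^{c,N,\theta}$ is pinned down as the unique eigenfunction of $D^{(L)}_N$ with eigenvalue $-|\lambda|$ and leading term $Q_\lambda$; uniqueness holds because, once the top term is fixed, the lower components are determined recursively with the nonzero denominators $|\nu|-|\lambda|$.

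The heart of the argument is an explicit \emph{lowering formula} for $\Psi$ on the Jack basis. Writing $\nu\nearrow\mu$ for the relation ``$\mu$ is obtained from $\nu$ by adjoining a single box $\square$,'' I expect
\be
\Psi\,Q_\mu(x;\theta)=\sum_{\nu\,\nearrow\,\mu}\bigl(N\theta+c_\theta(\square)\bigr)\bigl(c+(N-1)\theta+c_\theta(\square)\bigr)\,\dim_\theta(\nu,\mu)\,Q_\nu(x;\theta),\qquad\square=\mu/\nu.
\ee
Establishing this is the main obstacle: one must reorganize the combined action of $x_j\partial_j^2$, $c\sum_j\partial_j$, and the interaction term on $Q_\mu$ into a single box-removing operator, and check that the content-dependent weight $(N\theta+c_\theta(\square))(c+(N-1)\theta+c_\theta(\square))$ (rather than a bare branching coefficient) emerges at the removed box. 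The branching part $\dim_\theta(\nu,\mu)$ is exactly the matrix of the adjoint of multiplication by $p_1$, since $\dim_\theta(\nu,\mu)=\langle p_1P_\nu,Q_\mu\rangle_\theta$ when $|\mu/\nu|=1$; the delicate point is the radial weight, which is where the parameters $N\theta$ and $c+(N-1)\theta$ enter.

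Granting the lowering formula, the first expansion follows by solving the eigenvalue recursion downward. Writing $L_\lambda^{c,N,\theta}=\sum_{\mu\subseteq\lambda}a_{\lambda\mu}Q_\mu$ and matching the coefficient of $Q_\nu$ in $(-\hat d+\Psi)L_\lambda=-|\lambda|L_\lambda$ gives
\be
\sum_{\nu\,\nearrow\,\mu,\ \mu\subseteq\lambda}a_{\lambda\mu}\bigl(N\theta+c_\theta(\square)\bigr)\bigl(c+(N-1)\theta+c_\theta(\square)\bigr)\dim_\theta(\nu,\mu)=-|\lambda/\nu|\,a_{\lambda\nu}.
\ee
Substituting the proposed coefficients $a_{\lambda\mu}=(-1)^{|\lambda/\mu|}\frac{\dim_\theta(\mu,\lambda)}{|\lambda/\mu|!}(N\theta)_{\lambda/\mu,\theta}(c+(N-1)\theta)_{\lambda/\mu,\theta}$ reduces this, after the Pochhammer factor at $\square$ cancels the weight and the factorials produce $|\lambda/\nu|$, to the transitivity relation $\sum_{\square}\dim_\theta(\nu,\nu+\square)\dim_\theta(\nu+\square,\lambda)=\dim_\theta(\nu,\lambda)$. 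This I would derive from the defining expansion \eqref{dimexp} by splitting one factor of $p_1$ off $p_1^{|\lambda/\nu|}$, which shows more generally that $\sum_{|\kappa|=k}\dim_\theta(\nu,\kappa)\dim_\theta(\kappa,\lambda)$ is independent of the intermediate level $k$.

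Finally, the second expansion is obtained by inverting the first, and I would prove that the two triangular matrices are genuinely inverse by a self-contained computation. Multiplying the two coefficients and summing over an intermediate $\mu$ with $\nu\subseteq\mu\subseteq\lambda$, the skew Pochhammer symbols combine multiplicatively over the disjoint box sets $\lambda/\mu$ and $\mu/\nu$ into $(N\theta)_{\lambda/\nu,\theta}(c+(N-1)\theta)_{\lambda/\nu,\theta}$, the product of dimensions sums by the level-independent transitivity above to $\dim_\theta(\nu,\lambda)$ for every fixed cardinality $|\mu/\nu|=k$, and the remaining signed factorial weights collapse through $\sum_{k=0}^{n}(-1)^k\binom{n}{k}=[n=0]$ with $n=|\lambda/\nu|$. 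Hence the composite equals $\delta_{\nu\lambda}$, which simultaneously proves the inversion and the second formula.
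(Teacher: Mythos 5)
The paper does not prove this proposition at all: it is imported verbatim as \cite[Proposition 4.3]{BaFo}, so you are attempting something the author deliberately outsourced. Your overall strategy is the standard one (and is essentially how Baker--Forrester argue): split $D^{(L)}_N=-\hat d+\Psi$ with $\Psi$ degree-lowering, pin down $L_\lambda^{c,N,\theta}$ as the unique eigenfunction with eigenvalue $-|\lambda|$ and top term $Q_\lambda$, and solve the resulting recursion. The surrounding bookkeeping in your write-up is correct: the degree count of the interaction term, the transitivity identity $\sum_{|\kappa|=k}\dim_\theta(\nu,\kappa)\dim_\theta(\kappa,\lambda)=\dim_\theta(\nu,\lambda)$ obtained by splitting powers of $p_1$ in \eqref{dimexp}, the cancellation of one content factor against the skew Pochhammer symbol in the recursion, and the inversion via multiplicativity of $(z)_{\lambda/\mu,\theta}(z)_{\mu/\nu,\theta}=(z)_{\lambda/\nu,\theta}$ together with $\sum_k(-1)^k\binom{n}{k}=[n=0]$. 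I checked these reductions and they close correctly, including the sign and factorial bookkeeping.

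However, there is a genuine gap exactly where you flag it: the lowering formula
\be
\Psi\,Q_\mu=\sum_{\nu\nearrow\mu}\bigl(N\theta+c_\theta(\square)\bigr)\bigl(c+(N-1)\theta+c_\theta(\square)\bigr)\dim_\theta(\nu,\mu)\,Q_\nu
\ee
is stated as something you ``expect'' and is never established. This formula is not a side computation; it carries the entire analytic content of the proposition --- once it is granted, everything else is linear algebra with unitriangular matrices. Proving it requires expanding $\sum_j x_j\partial_j^2+2\theta\sum_{k\neq j}\frac{x_j}{x_j-x_k}\partial_j+c\sum_j\partial_j$ in the Jack basis (equivalently, in the $p_k$-coordinates as in the operator $\mathcal D$ quoted in the paper's remark after Corollary \ref{exprPL}) and identifying the content-dependent weight at the removed box; this is a nontrivial computation with Pieri/branching coefficients, on the same order of difficulty as the proposition itself. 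The identity does check out in low-degree cases (e.g.\ $\mu=(1)$ gives $zz'$ and $\mu=(2)$ gives $(z+1)(z'+1)$ with $z=N\theta$, $z'=c+(N-1)\theta$), so your formula is correct, but as written your argument reduces the cited result to an unproved statement of comparable depth. Either prove the lowering formula or, as the paper does, cite \cite{BaFo} for it.
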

Define a probability measure $\mu_{c,N}$ on the space $\mathbb R^N_{\mathrm{ord}} =\{x_1\geq\dots\geq x_N|x_i\in\mathbb R_{\geq 0}\}$ by
\begin{equation}
\label{dmu-def}
\mu_{c,N}(dx)=\mathrm{const\ }\prod_{i=1}^Nx_i^{c-1}e^{-x_i}\prod_{1\leq j\leq k\leq N}|x_j-x_k|^{2\theta}dx_1\dots dx_n.
\end{equation}
\begin{prop}[{\cite[Proposition 4.10]{BaFo}}]
\label{ortho}
The Laguerre symmetric polynomials are orthogonal in $L^2(\mathbb R^N_{\mathrm{ord}},\mu_{c,N})$. More precisely, we have
\be
\langle L_\lambda^{c,N,\theta},L_\mu^{c,N,\theta} \rangle=\delta_{\lambda,\mu}(N\theta)_{\lambda,\theta}(c+(N-1)\theta)_{\lambda,\theta}b_\lambda^{(\theta^{-1})}.
\ee
\end{prop}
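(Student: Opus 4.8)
The plan is to realize the Laguerre operator $D_N^{(L)}$ as a symmetric operator on $L^2(\mathbb R^N_{\mathrm{ord}},\mu_{c,N})$ and to read the orthogonality off its spectral decomposition, treating the evaluation of the norm as a separate, more computational step. First I would rewrite $D_N^{(L)}$ in divergence (Sturm--Liouville) form. Writing $W(x)=\prod_i x_i^{c-1}e^{-x_i}\prod_{j<k}|x_j-x_k|^{2\theta}$ for the density of $\mu_{c,N}$, the logarithmic derivative $\partial_j\log W=\frac{c-1}{x_j}-1+2\theta\sum_{k\neq j}\frac1{x_j-x_k}$ makes the identity $D_N^{(L)}f=W^{-1}\sum_j\partial_j\(x_jW\,\partial_j f\)$ a one-line check for every symmetric $f$. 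Integrating by parts over $\mathbb R^N_{\mathrm{ord}}$ then gives $\langle D_N^{(L)}f,g\rangle=-\mathrm{const}\int\sum_j x_jW\,\partial_jf\,\partial_jg\,dx$, which is manifestly symmetric in $f$ and $g$; the boundary terms vanish because $x_jW$ vanishes on the walls $x_j=0$ (as $c>0$), on the coincidence walls $x_j=x_k$ (as $\theta>0$), and at infinity (because of the exponential decay). Hence $D_N^{(L)}$ is symmetric on symmetric polynomials.

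Next I would extract the eigenvalues. Among the terms of $D_N^{(L)}$ only $-\sum_j x_j\partial_j$ preserves the polynomial degree, whereas $x_j\partial_j^2$, $c\,\partial_j$, and $2\theta\sum_{k\neq j}\frac{x_j}{x_j-x_k}\partial_j$ (the last after pairing the $j$ and $k$ summands, so that the quotient is a polynomial of degree one less) all lower it by one. Since by Proposition \ref{Lagpolex} the top-degree part of $L_\lambda^{c,N,\theta}$ is $Q_\lambda$, the eigenvalue must be $-|\lambda|$. Symmetry of $D_N^{(L)}$ then forces $\langle L_\lambda,L_\mu\rangle=0$ whenever $|\lambda|\neq|\mu|$, and in particular each $L_\lambda^{c,N,\theta}$ is orthogonal to every symmetric polynomial of strictly smaller degree.

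The main obstacle is that this eigenvalue depends only on $|\lambda|$, so the single operator $D_N^{(L)}$ cannot separate two partitions $\lambda\neq\mu$ of the same size. To break this degeneracy I would bring in the commuting family of higher Calogero--Sutherland integrals associated with $D_N^{(L)}$: these are again symmetric for $\mu_{c,N}$, the polynomials $L_\lambda^{c,N,\theta}$ are their joint eigenfunctions, and the joint eigenvalue does separate partitions of equal size, so distinct joint eigenvalues yield $\langle L_\lambda,L_\mu\rangle=0$ for all $\lambda\neq\mu$. (Equivalently, one can obtain the Laguerre polynomials as a degeneration of the Jacobi symmetric polynomials, whose orthogonality with respect to the Jack--Selberg density is already available.) I expect this separation of equal-size partitions to be the hard part of the argument; the symmetry of $D_N^{(L)}$ and the cross-degree orthogonality are comparatively routine.

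Finally, for the normalization I would compute $\langle L_\lambda,L_\lambda\rangle$ by the reduction $\langle L_\lambda,L_\lambda\rangle=\langle Q_\lambda,L_\lambda\rangle$. This is valid already from cross-degree orthogonality: the second formula of Proposition \ref{Lagpolex} gives $Q_\lambda=L_\lambda+\sum_{\mu\subsetneq\lambda}d_{\lambda\mu}L_\mu$ with every $|\mu|<|\lambda|$, and those lower $L_\mu$ are orthogonal to $L_\lambda$. Expanding the right-hand factor back through the first formula of Proposition \ref{Lagpolex} reduces $\langle Q_\lambda,L_\lambda\rangle$ to a sum of moment integrals $\int Q_\lambda Q_\nu\,d\mu_{c,N}$ over $\nu\subseteq\lambda$, which I would evaluate using a generalized (Kadell--Kaneko) Jack--Selberg integral. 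Matching that evaluation against the Jack norm $b_\lambda^{(\theta^{-1})}=\langle Q_\lambda,Q_\lambda\rangle_\theta$ should produce the claimed factor $(N\theta)_\lambda(c+(N-1)\theta)_\lambda\,b_\lambda^{(\theta^{-1})}$; the Selberg-type evaluation is the only genuinely computational input here.
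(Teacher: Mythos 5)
The paper does not prove this proposition at all: it is imported verbatim from Baker--Forrester \cite[Prop.\ 4.10]{BaFo}, so there is no internal argument to compare yours against. Judged on its own terms, your strategy is the standard one and the steps you actually carry out are correct: the divergence form $D_N^{(L)}f=W^{-1}\sum_j\partial_j(x_jW\partial_jf)$ checks out, the boundary terms do vanish for $c>0$, $\theta>0$, the degree-preserving part of $D_N^{(L)}$ is $-\sum_jx_j\partial_j$, so the eigenvalue on $L_\lambda^{c,N,\theta}$ is $-|\lambda|$, and symmetry then gives orthogonality across distinct degrees. The reduction $\langle L_\lambda,L_\lambda\rangle=\langle Q_\lambda,L_\lambda\rangle$ via the unitriangularity of the second formula in Proposition \ref{Lagpolex} is also valid once cross-degree orthogonality is in hand.

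However, the two load-bearing steps are only named, not proved, and both are genuine gaps as written. First, to separate partitions of equal size you invoke ``the commuting family of higher Calogero--Sutherland integrals'': for this to close the argument you must actually exhibit such operators, show they are symmetric with respect to $\mu_{c,N}$ (another integration by parts, but now for higher-order operators whose divergence form is not a one-line check), show the $L_\lambda^{c,N,\theta}$ are their joint eigenfunctions, and show the joint spectrum separates partitions of the same size for all $\theta>0$ (for a single second integral the eigenvalues can still collide at special $\theta$, so one genuinely needs the full family or a genericity-plus-continuity argument). None of this is routine, and you rightly flag it as the hard part --- but flagging it does not supply it. Second, the norm computation is reduced to the mixed moments $\int Q_\lambda Q_\nu\,d\mu_{c,N}$ for general pairs $\nu\subseteq\lambda$; the Kadell--Kaneko evaluations give a product formula for the Laguerre average of a \emph{single} Jack polynomial, whereas a product $Q_\lambda Q_\nu$ requires expanding through Jack Littlewood--Richardson coefficients, which have no closed form. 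So ``the only genuinely computational input'' is substantially heavier than a Selberg integral, and it is not clear this route produces the clean factor $(N\theta)_\lambda(c+(N-1)\theta)_\lambda b_\lambda^{(\theta^{-1})}$ without further ideas (Baker--Forrester instead get the norm from operator identities relating $L_\lambda$ to $Q_\lambda$ by an explicit exponential of a lowering operator). In the context of this paper the honest move is to do what the author does and cite \cite{BaFo}; if you want a self-contained proof, the two steps above are where the real work lies.
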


\begin{remark}
Notation used in \cite{BaFo} is different from ours, see Appendix A for the match between the notations.
\end{remark}

Now using the same ideas as in \cite{DH},\cite{lag}, we will build "the analytic continuation" of the Laguerre symmetric polynomials taking $L_\lambda^{c,N,\theta}$ as a degeneration of some symmetric function with coefficients in $\mathbb C[z,z']$. Recall that $\pi_N$ denote the $N$-th truncation map $\Lambda\to\Lambda^{(N)}$.

\begin{theo}
\label{Lagexist}
Let $\lambda$ be a partition. There is a unique symmetric function $\L_\lambda$ in $\Lambda\otimes\mathbb C[z,z']$ such that 
\begin{equation}
\label{projection}
\pi_N\left(\restr{\L_\lambda}{z=N\theta,z'=c+(N-1)\theta}\right)=L^{c,N,\theta}_\lambda
\end{equation}
for any $N\geq l(\lambda)$ and $c>0$.
\end{theo}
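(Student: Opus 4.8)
The plan is to write down the obvious candidate obtained by ``de-specialising'' the two Pochhammer factors in Proposition \ref{Lagpolex}, and then to verify the two halves of the statement separately. Concretely, I would set
\be
\L_\lambda=\sum_{\mu\subseteq\lambda}(-1)^{|\lambda|-|\mu|}\frac{\dim_\theta(\mu,\lambda)}{(|\lambda|-|\mu|)!}(z)_{\lambda/\mu,\theta}(z')_{\lambda/\mu,\theta}\,Q_\mu(x;\theta).
\ee
Since each factor $(z)_{\lambda/\mu,\theta}=\prod_{\square\in\lambda/\mu}(z+c_\theta(\square))$ lies in $\mathbb C[z]$, and likewise $(z')_{\lambda/\mu,\theta}\in\mathbb C[z']$, while $Q_\mu\in\Lambda$, this is a bona fide element of $\Lambda\otimes\mathbb C[z,z']$.

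For existence, I would first note that by the very definition of the generalised Pochhammer symbol, substituting $z=N\theta$ turns $(z)_{\lambda/\mu,\theta}$ into $(N\theta)_{\lambda/\mu,\theta}$ and substituting $z'=c+(N-1)\theta$ turns $(z')_{\lambda/\mu,\theta}$ into $(c+(N-1)\theta)_{\lambda/\mu,\theta}$. Hence after this specialisation the coefficients of $\L_\lambda$ become exactly those appearing in the first formula of Proposition \ref{Lagpolex}. It then remains to apply $\pi_N$, for which I would invoke the stability of Jack polynomials under truncation: for $\mu$ with $l(\mu)\leq N$ one has $\pi_N Q_\mu(x;\theta)=Q_\mu(x_1,\dots,x_N;\theta)$. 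Since every $\mu\subseteq\lambda$ satisfies $l(\mu)\leq l(\lambda)\leq N$, no term is killed by $\pi_N$, and the result coincides termwise with the right-hand side of Proposition \ref{Lagpolex}, that is, with $L_\lambda^{c,N,\theta}$. This establishes \eqref{projection}.

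For uniqueness, suppose $\L_\lambda$ and $\L_\lambda'$ both satisfy \eqref{projection}; I would show their difference $D\in\Lambda\otimes\mathbb C[z,z']$ vanishes. Expand $D=\sum_\mu f_\mu(z,z')Q_\mu(x;\theta)$ in the $Q$-basis, a finite sum with $f_\mu\in\mathbb C[z,z']$. Fixing $N\geq l(\lambda)$, applying the specialisation and $\pi_N$ and using stability again yields $\sum_{\mu:\,l(\mu)\leq N}f_\mu(N\theta,c+(N-1)\theta)\,Q_\mu(x_1,\dots,x_N;\theta)=0$. The polynomials $Q_\mu(x_1,\dots,x_N;\theta)$ with $l(\mu)\leq N$ are linearly independent, so $f_\mu(N\theta,c+(N-1)\theta)=0$ for each such $\mu$ and every $c>0$. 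For fixed $\mu$ and $N$ this is a polynomial identity in $c$ holding on the infinite set $(0,\infty)$, hence $f_\mu(N\theta,w)=0$ for all $w$. Letting $N$ range over all integers $\geq\max\{l(\lambda),l(\mu)\}$, which gives infinitely many distinct values $N\theta$ since $\theta>0$, and repeating the ``vanishing on an infinite set'' argument in the first variable shows $f_\mu\equiv0$ for every $\mu$, so $D=0$.

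The only genuinely delicate point is this uniqueness argument, and specifically the observation that the specialisation locus $\{(N\theta,\,c+(N-1)\theta):N\geq l(\lambda),\ c>0\}$ is Zariski-dense enough in $\mathbb C^2$ to force a polynomial to vanish. This is what lets one pass from equality of the truncated specialisations back to equality of symmetric functions with polynomial coefficients; it is carried out in two stages, first freeing $c$ along each vertical line $z=N\theta$ and then freeing $N$, and it is exactly here that the hypothesis $\theta>0$, guaranteeing infinitely many distinct values of $N\theta$, is used.
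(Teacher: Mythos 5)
Your proposal is correct and follows essentially the same route as the paper: the same explicit candidate $\L_\lambda=\sum_{\mu\subseteq\lambda}(-1)^{|\lambda|-|\mu|}\frac{\dim_\theta(\mu,\lambda)}{(|\lambda|-|\mu|)!}(z)_{\lambda/\mu,\theta}(z')_{\lambda/\mu,\theta}Q_\mu(x;\theta)$ for existence via Proposition \ref{Lagpolex}, and uniqueness by expanding the difference in the Jack basis and observing that the specialisation locus $\{(N\theta,c+(N-1)\theta)\}$ is a uniqueness set for polynomials in two variables. The only difference is that you spell out the two-stage ``vanish in $c$, then in $N$'' argument that the paper leaves implicit, which is a welcome clarification rather than a departure.
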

\begin{defin}
The symmetric function $\L_\lambda$ is called the \emph{Laguerre symmetric function}.
\end{defin}
\begin{proof}[Proof of Theorem \ref{Lagexist}]
For every $\lambda$ such that $l(\lambda)\leq N$
\be
\pi_N(Q_\lambda(x;\theta))=Q_\lambda(x_1,\dots,x_N;\theta)\neq0.
\ee
Then the existence of the Laguerre functions follows from Proposition \ref{Lagpolex} because we can exhibit an expression satisfying \eqref{projection}:
\be
\L_\lambda(x)=\sum_{\mu\subseteq\lambda}(-1)^{|\lambda|-|\mu|}\frac{\dim_\theta(\mu,\lambda)}{(|\lambda|-|\mu|)!}(z)_{\lambda/\mu,\theta}(z')_{\lambda/\mu,\theta}Q_\mu(x;\theta).
\ee

To prove the uniqueness it is enough to show that the conditions
\begin{equation}
\label{conditions}
\pi_N\left(\restr{f}{z=N\theta,z'=c+(N-1)\theta}\right)=0
\end{equation}
force $f=0$. Assume that $f$ satisfies \eqref{conditions} and let $a_\lambda(z,z')=\langle f, Q_\lambda(x;\theta)\rangle_\theta\in\mathbb C[z,z']$ denote the coefficients in the expansion of $f$ in the basis of Jack functions. Then for every $N\geq l(\lambda)$ and $c>0$ we have
\be
a_\lambda(N\theta,c+(N-1)\theta)=0.
\ee
Then $a_\lambda(z,z')\equiv0$ because
\be
\{ (z,z')=(N\theta, c+(N-1)\theta) | N\geq l(\lambda),  c>0\}\subset \mathbb C^2
\ee
is a uniqueness set for polynomials in two variables. Hence $f=0$, which proves the uniqueness.
\end{proof}
\begin{cor}
We have
\begin{equation}
\label{lagdef}
\L_\lambda(x)=\sum_{\mu\subseteq\lambda}(-1)^{|\lambda|-|\mu|}\frac{\dim_\theta(\mu,\lambda)}{(|\lambda|-|\mu|)!}(z)_{\lambda/\mu,\theta}(z')_{\lambda/\mu,\theta}Q_\mu(x;\theta).
\end{equation}
\end{cor}

Note that $\L_\lambda(x)$ are  non-homogeneous symmetric functions with the top degree term equal to $Q_\lambda(x;\theta)$. To work with $\L_\lambda$ we will consider the natural filtration of $\Lambda\otimes \mathbb C[z,z']$ defined by
\be
\Lambda_{\leq n}\otimes \mathbb C[z,z']:=\bigoplus_{i=0}^{n}\Lambda_i\otimes\mathbb C[z,z'].
\ee

\begin{prop}
\label{PexpL}
The Laguerre functions $\L_\lambda$ with $|\lambda|\leq n$ form a $\mathbb C[z,z']$-basis of $\Lambda_{\leq n}\otimes \mathbb  C[z,z']$. In particular, 
\begin{equation}
\label{a}
P_\lambda(x;\theta)=\left(b_\mu^{(\theta^{-1})}\right)^{-1}\sum_{\mu\subseteq\lambda}\frac{\dim_\theta(\mu,\lambda)}{(|\lambda|-|\mu|)!}(z)_{\lambda/\mu,\theta}(z')_{\lambda/\mu,\theta}\L_\mu(x).
\end{equation}
\end{prop}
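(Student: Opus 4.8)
The plan is to exploit the triangular structure of the defining expansion \eqref{lagdef}. First I would record that the coefficient of $Q_\lambda$ in \eqref{lagdef}, i.e.\ the term $\mu=\lambda$, equals $1$: indeed $\dim_\theta(\lambda,\lambda)=1$ by \eqref{dimexp}, one has $(|\lambda|-|\lambda|)!=1$, the skew Pochhammer symbols over the empty diagram $\lambda/\lambda$ are empty products equal to $1$, and the sign is $(-1)^{0}=1$. Every remaining summand involves $Q_\mu$ with $\mu\subsetneq\lambda$, hence $|\mu|<|\lambda|$. Fixing any total order on partitions refining the containment order (for instance by $|\cdot|$ and then lexicographically), the passage from $\{Q_\mu\}_{|\mu|\leq n}$ to $\{\L_\lambda\}_{|\lambda|\leq n}$ is thus encoded by a unitriangular matrix with entries in $\mathbb C[z,z']$.

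Since $\{Q_\lambda\}$ is a $\mathbb C[z,z']$-basis of $\Lambda[z,z']$ --- it is a basis of $\Lambda$ extended by scalars, and $\L_\lambda\in\Lambda[z,z']_{\leq|\lambda|}$ respects the filtration --- and a unitriangular matrix over a commutative ring is invertible with inverse again unitriangular over that ring (write it as $I+N$ with $N$ nilpotent and expand the geometric series, which terminates), the family $\{\L_\lambda\}_{|\lambda|\leq n}$ is automatically a $\mathbb C[z,z']$-basis of $\Lambda[z,z']_{\leq n}$. This proves the first assertion.

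For the explicit inversion I would not invert the matrix by hand; instead I would transport the second identity of Proposition \ref{Lagpolex} to $\Lambda\otimes\mathbb C[z,z']$ by the analytic-continuation argument already used in the proof of Theorem \ref{Lagexist}. Concretely, set
\[
g=Q_\lambda-\sum_{\mu\subseteq\lambda}\frac{\dim_\theta(\mu,\lambda)}{(|\lambda|-|\mu|)!}(z)_{\lambda/\mu,\theta}(z')_{\lambda/\mu,\theta}\L_\mu\in\Lambda\otimes\mathbb C[z,z'].
\]
Under the specialization $z=N\theta,\ z'=c+(N-1)\theta$ followed by $\pi_N$, each $\L_\mu$ becomes $L_\mu^{c,N,\theta}$ by \eqref{projection}, and the second formula of Proposition \ref{Lagpolex} shows that $g$ specializes to $0$ for all $N\geq l(\lambda)$ and $c>0$. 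By the uniqueness-set argument from Theorem \ref{Lagexist}, every coefficient of $g$ in the Jack basis vanishes identically, whence $g=0$. Multiplying the resulting identity for $Q_\lambda$ by $(b_\lambda^{(\theta^{-1})})^{-1}$ and using $P_\lambda=(b_\lambda^{(\theta^{-1})})^{-1}Q_\lambda$ gives \eqref{a}.

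I expect the last paragraph to carry the only real subtlety. A direct inversion of the unitriangular matrix does produce coefficients in $\mathbb C[z,z']$, but identifying them with the closed combinatorial expression $\dim_\theta(\mu,\lambda)(z)_{\lambda/\mu,\theta}(z')_{\lambda/\mu,\theta}/(|\lambda|-|\mu|)!$ would require a nontrivial combinatorial summation. Routing instead through the already established polynomial identity of Proposition \ref{Lagpolex} together with the uniqueness of analytic continuation bypasses that computation, so the key step is recognizing that the inverse should be imported from the $N$-variable setting rather than derived afresh.
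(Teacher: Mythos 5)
Your proof is correct and follows essentially the same route as the paper: the identity \eqref{a} is obtained by writing both sides in a basis with coefficients in $\mathbb C[z,z']$, specializing to $(z,z')=(N\theta,c+(N-1)\theta)$ where it reduces to the second formula of Proposition \ref{Lagpolex}, and invoking the uniqueness-set argument from Theorem \ref{Lagexist}. The only (harmless) differences are that you establish the basis claim directly from the unitriangularity of \eqref{lagdef} rather than deducing it from \eqref{a}, and that you correctly read the stray prefactor $\bigl(b_\mu^{(\theta^{-1})}\bigr)^{-1}$ in \eqref{a} as $\bigl(b_\lambda^{(\theta^{-1})}\bigr)^{-1}$.
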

\begin{proof}
Note that \eqref{lagdef} and \eqref{a} imply that the transition matrix from $P_\lambda$ to $\L_\lambda$ is non-degenerate and upper-triangular (with respect to the order $\lambda\subseteq \mu$). Since the funcitons $P_\lambda(x;\theta)$ with $|\lambda|\leq n$ form a basis of $\Lambda_{\leq n}$, the functions $\L_\lambda(x)$ with $|\lambda|\leq n$ also form a basis of $\Lambda_{\leq n}\otimes \mathbb C[z,z']$. So it is enough to prove \eqref{a}. 

Using \eqref{lagdef} write the right hand side of \eqref{a} as a linear combination of $P_\mu$ with coefficients in $\mathbb C[z,z']$. Note that $\deg \L_\lambda=|\lambda|$, so only $P_\mu$ with $|\mu|\leq|\lambda|$ will occur. By Proposition \ref{Lagpolex} the equation holds for $(z,z')=(N\theta, c+(N-1)\theta)$ with $N\geq l(\lambda)$. Hence $\eqref{a}$ holds in $\Lambda[z,z']$ by the same argument as in the previous proof.
\end{proof}

In order to establish the orthogonality of the functions $\L_\lambda$ define the \emph{Thoma cone} $\tOmega$ as the subspace of $\mathbb R^\infty_{\geq0}\times\mathbb R^\infty_{\geq0}\times \mathbb R_{\geq0}$ consisting of triples $(\widetilde\alpha, \widetilde\beta, r)$ such that
\be
\widetilde\alpha_1\geq \widetilde\alpha_2\geq\dots\geq 0,\quad \widetilde\beta_1\geq\widetilde\beta_2\geq\dots\geq 0,
\ee
\be
\sum\widetilde\alpha_i+\sum\widetilde\beta_i\leq r.
\ee

An embedding of $\Lambda$ in the space of continuous functions on $\tOmega$ is defined by setting
\begin{gather*}
p_1\mapsto r,\\ 
p_k\mapsto \sum\widetilde\alpha_i^{k}+(-\theta)^{k-1}\sum\widetilde\beta_i^{k},\quad k\geq 2.
\end{gather*}

Note that the Thoma cone is a cone with the base $\Omega$ and the vertex $\tilde 0=(0,0,0)$, i.e. $\tOmega\backslash\{\tilde 0\}\cong\Omega\times\mathbb R_{>0}$. The isomorphism is given by sending a point $(\widetilde\alpha,\widetilde\beta,r)$ to $((\widetilde\alpha_i/r,\widetilde\beta_i/r),r)\in\Omega\times\mathbb R_{>0}$. The \emph{lifting} of the measure $\M$ is defined as a measure $\tM$ on $\tOmega\backslash\{\tilde 0\}=\mathbb R_{>0}\times\Omega$ equal to the product 
\be
\tM:=\M\otimes\gamma_{zz'\theta^{-1}},
\ee
where $\gamma_{zz'\theta^{-1}}$ is the Gamma distribution defined before.

It is readily seen that for any $f\in\Lambda_n$ we have
\begin{multline}
\label{decompPoc}
\int_{\tOmega} f(x)\tM(dx)=\int_{\mathbb R_{\geq0}}r^n\gamma_{\theta^{-1}zz'}(dr)\int_\Omega f^\circ(\omega)\M(d\omega)\\\frac{\Gamma(\theta^{-1}zz'+n)}{\Gamma(\theta^{-1}zz')}\int_\Omega f^\circ(\omega)\M(d\omega)=(\theta^{-1}zz')_n\int_\Omega f^\circ(\omega)\M(d\omega).
\end{multline}

The following proposition shows that $\tM$ is in fact an extrapolation of the measures $\mu_{c,N}$ defined in \eqref{dmu-def}.

\begin{prop}
For $(z,z')=(N\theta, c+(N-1)\theta)$ the measure $\tM$ degenerates to the measure $\mu_{c,N}$ on 
\be
\mathbb R^N_{\mathrm{ord}}=\{(\widetilde\alpha,\widetilde\beta,r)\in \tOmega : \alpha_{N+1}=\alpha_{N+2}=\dots=0, \beta=0, r=\alpha_1+\dots+\alpha_N\}.
\ee
\end{prop}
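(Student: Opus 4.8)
The plan is to identify the two probability measures by matching their integrals against the basis $\{Q_\lambda\}$ of $\Lambda$ and then appealing to moment-determinacy. The first step computes the moments of $\tM$. Since $Q_\lambda$ is homogeneous of degree $|\lambda|$, formula \eqref{decompPoc} applies and, combined with the characterizing identity \eqref{zmeasurechar}, makes the factor $(\theta^{-1}zz')_{|\lambda|}$ cancel:
\[
\int_{\tOmega} Q_\lambda(x)\,\tM(dx)=(\theta^{-1}zz')_{|\lambda|}\int_\Omega Q^\circ_\lambda(\omega)\,\M(d\omega)=\frac{\dim_\theta(\lambda)\,(z)_\lambda\,(z')_\lambda}{|\lambda|!}.
\]
Specializing to $(z,z')=(N\theta,c+(N-1)\theta)$ turns the right-hand side into $\dim_\theta(\lambda)\,(N\theta)_\lambda\,(c+(N-1)\theta)_\lambda/|\lambda|!$.

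The second step computes the same integrals against $\mu_{c,N}$. On the face the symmetric function $Q_\lambda$ restricts to its $N$-variable specialization, which I expand in the Laguerre basis by the second identity of Proposition \ref{Lagpolex} and integrate term by term. Because $\mu_{c,N}$ is a probability measure and $L^{c,N,\theta}_\varnothing=Q_\varnothing=1$, the orthogonality of Proposition \ref{ortho} gives $\int L^{c,N,\theta}_\mu\,d\mu_{c,N}=\langle L^{c,N,\theta}_\mu,L^{c,N,\theta}_\varnothing\rangle=\delta_{\mu,\varnothing}$, so only the term $\mu=\varnothing$ survives. Using $\dim_\theta(\varnothing,\lambda)=\dim_\theta(\lambda)$ and $(z)_{\lambda/\varnothing,\theta}=(z)_\lambda$ I obtain
\[
\int Q_\lambda(x)\,d\mu_{c,N}(x)=\frac{\dim_\theta(\lambda)}{|\lambda|!}\,(N\theta)_\lambda\,(c+(N-1)\theta)_\lambda,
\]
which is exactly the value found above. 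Hence $\int Q_\lambda\,d\tM=\int Q_\lambda\,d\mu_{c,N}$ for every $\lambda$ at the degenerate parameters. As a consistency check, both sides vanish once $l(\lambda)>N$: the box $(N+1,1)$ contributes the factor $N\theta+c_\theta(N+1,1)=N\theta-\theta N=0$ to $(N\theta)_\lambda$, which already signals that the support should collapse onto the $N$-dimensional face.

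The third step upgrades equality of moments to equality of measures, and this determinacy is the point I expect to require the most care. Both measures are moment-determinate for elementary reasons: $\mu_{c,N}$ carries the exponentially decaying weight $\prod_i x_i^{c-1}e^{-x_i}\prod_{j<k}|x_j-x_k|^{2\theta}$ on the finite-dimensional face, while $\tM=\M\otimes\gamma_{\theta^{-1}zz'}$ has a compactly supported angular part and a gamma radial part, so each is recovered from its integrals against $\Lambda$, the coordinates $p_1,p_2,\dots$ separating the points of $\tOmega$. The genuinely delicate ingredient is to confirm that $\tM$ is concentrated on the face $\{\beta=0,\ \alpha_{N+1}=\alpha_{N+2}=\dots=0,\ r=\sum_i\alpha_i\}$. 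For this I would use the vanishing $\int_\Omega Q^\circ_\lambda\,d\M=0$ for all $\lambda$ with $l(\lambda)>N$ together with the nonnegativity $Q^\circ_\lambda\ge0$ on $\Omega$ (which follows from $P^\circ_\lambda\ge0$ and $Q_\lambda=b^{(\theta^{-1})}_\lambda P_\lambda$): these force $Q^\circ_\lambda=0$ holding $\M$-almost everywhere for every such $\lambda$, trapping the support of $\M$ on the locus where $P^\circ_\lambda=0$ for all $l(\lambda)>N$, i.e. where $\beta=0$ and at most $N$ of the $\alpha_i$ are nonzero. Once the support is pinned down, the problem reduces to two determinate symmetric measures on $\mathbb R^N_{\mathrm{ord}}$ with identical moments, and equality follows.
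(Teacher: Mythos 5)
Your argument takes a genuinely different route from the paper. The paper treats this proposition as essentially known: it views $\M$ as the boundary limit of the discrete z-measures $M_n(\lambda)$ on partitions, notes that for the degenerate series $(z,z')=(N\theta,c+(N-1)\theta)$ these concentrate on diagrams with at most $N$ rows, and refers to \cite{BoOl1} and \cite{Ke1} for the computation identifying the scaling limit with $\mu_{c,N}$. You instead give a direct moment-matching argument inside the Laguerre formalism that the paper itself develops: your two computations (the $\tM$-moments of $Q_\lambda$ via \eqref{decompPoc} and \eqref{zmeasurechar}, and the $\mu_{c,N}$-moments via the second identity of Proposition \ref{Lagpolex} together with Proposition \ref{ortho}) are both correct, and the moments indeed coincide at the degenerate parameters. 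This buys self-containedness, at the price of having to settle the determinacy and support questions which the discrete approximation handles automatically.

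The step you should not leave at the level of a gesture is the support localization, and as stated it is slightly incomplete. From $(N\theta)_{\lambda,\theta}=0$ for $l(\lambda)>N$ and the nonnegativity of $P^\circ_\lambda$ you correctly conclude $P^\circ_\lambda=0$ holds $\M$-a.e.\ for every such $\lambda$, but you describe the common zero locus as ``$\beta=0$ and at most $N$ of the $\alpha_i$ are nonzero.'' That set still contains points with $\sum_{i\le N}\alpha_i<1$, which after lifting to $\tOmega$ violate $r=\widetilde\alpha_1+\dots+\widetilde\alpha_N$; without excluding them $\tM$ is not supported on $\mathbb R^N_{\mathrm{ord}}$ and the reduction to an $N$-variable moment problem does not go through. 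The missing condition does follow from the same vanishing: already $\lambda=(1^{N+1})$ suffices, since $P_{(1^{N+1})}=e_{N+1}$ and the image of the generating series $\sum_k e_k^\circ t^k$ under $p_k\mapsto p_k^\circ$ equals $e^{\gamma t}\prod_i(1+\alpha_i t)\prod_j(1-\theta\beta_j t)^{-1/\theta}$ with $\gamma=1-\sum\alpha_i-\sum\beta_j$, so $e_{N+1}^\circ(\omega)$ is a sum of nonnegative terms one of which is $\gamma^{N+1}/(N+1)!$; hence its vanishing forces $\gamma=0$ as well as $\beta=0$ and $\alpha_{N+1}=0$. This uses the explicit positive specialization formula of \cite{KOO}, which you should cite or prove. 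With that repaired, the final determinacy step is fine: both measures become permutation-symmetric measures on $\mathbb R^N$ with finite exponential moments, so equality of integrals against symmetric polynomials yields equality of all moments and hence of the measures.
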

To prove this proposition consider measures $M_n(\lambda)$ defined on partitions of a number $n$ by 
\be
M_n(\lambda):=\frac{\dim_\theta(\lambda)^2(z)_{\lambda,\theta}(z')_{\lambda,\theta}}{|\lambda|!(\theta^{-1}zz')_{|\lambda|}b_\lambda^{(\theta^{-1})}}.
\ee
The measure $\M$ can be thought of as the limit of these measures and the proposition follows from a direct computation for $(z,z')=(N\theta, c+(N-1)\theta)$. See \cite[Remark 1.10]{BoOl1} and \cite[Section 12]{Ke1} for a more detailed explanation of this fact.

\begin{theo}
The functions $\L_\lambda$ are orthogonal in $L^2(\tOmega, \tM)$:
\be
\langle\L_{\lambda},\L_{\mu}\rangle_{L^2(\tOmega,\tM)}=\delta_{\lambda,\mu}(z)_{\lambda,\theta}(z')_{\lambda,\theta}b_\lambda^{(\theta^{-1})}.
\ee
\end{theo}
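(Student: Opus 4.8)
The plan is to show that both sides of the asserted identity are polynomials in $(z,z')$ and that they agree on the degenerate series $(z,z')=(N\theta,c+(N-1)\theta)$, which is a uniqueness set for polynomials in two variables. This mirrors the argument already used in the proof of Theorem \ref{Lagexist}, where the degeneration to finite-variable objects plus a uniqueness set was the engine of the whole construction.

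First I would verify that the left-hand side $\langle\L_\lambda,\L_\mu\rangle_{L^2(\tOmega,\tM)}=\int_{\tOmega}\L_\lambda\L_\mu\,\tM(dx)$ depends polynomially on $(z,z')$. Expanding $\L_\lambda$ and $\L_\mu$ in the basis $\{Q_\nu\}$ via \eqref{lagdef}, the coefficients are polynomials in $(z,z')$, so $\L_\lambda\L_\mu$ is a $\mathbb C[z,z']$-linear combination of products $Q_\nu Q_\sigma$. Each such product expands in the $Q$-basis with structure constants that depend only on the fixed parameter $\theta$, so it is enough to see that each moment $\int_{\tOmega}Q_\tau\,\tM(dx)$ is polynomial in $(z,z')$. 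Applying \eqref{decompPoc} with $n=|\tau|$ together with the characterization \eqref{zmeasurechar} of $\M$, the factor $(\theta^{-1}zz')_{|\tau|}$ cancels and one obtains
\be
\int_{\tOmega}Q_\tau\,\tM(dx)=\frac{\dim_\theta(\tau)(z)_\tau(z')_\tau}{|\tau|!},
\ee
which is manifestly polynomial. The right-hand side $\delta_{\lambda,\mu}(z)_{\lambda,\theta}(z')_{\lambda,\theta}b_\lambda^{(\theta^{-1})}$ is a product of linear factors in $z$ and in $z'$, hence polynomial as well.

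Next I would evaluate both polynomials on the degenerate series $(z,z')=(N\theta,c+(N-1)\theta)$ with $N\geq\max(l(\lambda),l(\mu))$ and $c>0$. By the Proposition preceding this theorem, $\tM$ then degenerates to the measure $\mu_{c,N}$ supported on the $N$-dimensional face $\mathbb R^N_{\mathrm{ord}}$, on which the symmetric function $\L_\lambda$ evaluates to $\pi_N\(\restr{\L_\lambda}{z=N\theta,z'=c+(N-1)\theta}\)=L^{c,N,\theta}_\lambda$ by Theorem \ref{Lagexist}. Consequently the infinite-dimensional integral collapses, giving
\be
\restr{\langle\L_\lambda,\L_\mu\rangle_{L^2(\tOmega,\tM)}}{(z,z')=(N\theta,c+(N-1)\theta)}=\langle L^{c,N,\theta}_\lambda,L^{c,N,\theta}_\mu\rangle_{L^2(\mathbb R^N_{\mathrm{ord}},\mu_{c,N})},
\ee
and Proposition \ref{ortho} identifies the latter with $\delta_{\lambda,\mu}(N\theta)_{\lambda,\theta}(c+(N-1)\theta)_{\lambda,\theta}b_\lambda^{(\theta^{-1})}$. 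Since $(z)_{\lambda,\theta}$ and $(z')_{\lambda,\theta}$ specialize precisely to $(N\theta)_{\lambda,\theta}$ and $(c+(N-1)\theta)_{\lambda,\theta}$ under this substitution, the two polynomials agree at every point of the degenerate series.

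Finally, the set $\{(N\theta,c+(N-1)\theta)\mid N\geq\max(l(\lambda),l(\mu)),\ c>0\}$ differs from the uniqueness set used in the proof of Theorem \ref{Lagexist} by only finitely many values of $N$, so it remains a uniqueness set for polynomials in two variables; hence the two polynomials coincide identically, which is the claim. The step requiring the most care is the collapse in the previous paragraph: one must check that the cone embedding of $\Lambda$ into functions on $\tOmega$ restricts on the face $\mathbb R^N_{\mathrm{ord}}$ exactly to the $N$-variable truncation $\pi_N$, so that the value of $\L_\lambda$ there is genuinely the Baker--Forrester polynomial $L^{c,N,\theta}_\lambda$, and that the normalization of $\mu_{c,N}$ matches the degeneration of $\tM$. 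The polynomiality of the moments and the verification that the uniqueness set survives are then routine.
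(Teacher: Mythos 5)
Your proposal is correct and follows essentially the same route as the paper: both sides are shown to depend algebraically on $(z,z')$ (you establish polynomiality via the moments of $Q_\tau$, the paper settles for rationality via \eqref{zmeasurechar}), the identity is verified on the degenerate series $(z,z')=(N\theta,c+(N-1)\theta)$ using the degeneration of $\L_\lambda$ to $L^{c,N,\theta}_\lambda$ and of $\tM$ to $\mu_{c,N}$ together with Proposition \ref{ortho}, and the conclusion follows from the uniqueness-set argument. The extra care you flag about the restriction to the face $\mathbb R^N_{\mathrm{ord}}$ agreeing with $\pi_N$ is exactly what the paper's terse proof leaves implicit.
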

\begin{proof}
First note that 
 $\int_{\tOmega} P_\lambda(\omega) \tM(d\omega)$
is a rational function in $z, z'$ (by \eqref{zmeasurechar}). Hence for any $f\in\Lambda[z,z']$ 
\be
\int_{\tOmega} f(\omega) \tM(d\omega)\in\mathbb R(z,z').
\ee
In particular, $\langle\L_{\lambda},\L_{\mu}\rangle_{L^2(\tOmega,\tM)}$ is a rational function in $z,z'$. 

For $(z,z')=(N\theta, c+ (N-1)\theta)$ the orthogonality relations hold by Proposition \ref{ortho}. So these relations should hold for every $(z,z')$.
\end{proof}

\begin{cor}
\label{exprPL}
For any partitions $\lambda,\mu$
\be
\langle P_\lambda, \L_\mu\rangle_{L^2(\tOmega,\tM)}=\left(b_\lambda^{(\theta^{-1})}\right)^{-1}b_\mu^{(\theta^{-1})}\frac{\dim_\theta(\mu,\lambda)}{(|\lambda|-|\mu|)!}(z)_{\lambda,\theta}(z')_{\lambda,\theta}.
\ee
\end{cor}
\begin{proof}
Use the orthogonality relations for $\L_\lambda$ and Proposition \ref{PexpL}.
\end{proof}

\begin{remark}
The Laguerre symmetric functions for the case $\theta=1$ were studied in \cite{lag}. They were also described in \cite{DH} from a different perspective where the Laguerre symmetric functions were defined as eigenfunctions of the differential operator
\begin{multline*}
\mathcal D=\sum_{i=1}^\infty(-ip_i\frac{\partial}{\partial p_i} + (z+z')(i+1)p_i\frac{\partial}{\partial p_{i+1}}+(1-\theta)i(i+1)p_i\frac{\partial}{\partial p_{i+1}})+\frac{zz'}{\theta}\frac{\partial}{\partial p_1}\\+\sum_{i,j=1}^\infty(ijp_{i+j-1}\frac{\partial^2}{\partial p_i\partial p_j}+\theta(i+j+1)p_ip_j\frac{\partial}{\partial p_{i+j+1}}).
\end{multline*}
Similarly to the process on the Thoma simplex considered here, one can define a process on $\tOmega$ using $\mathcal D$ as a pregenerator, with $\tM$ being the symmetrizing measure of the process. See \cite{BoOl3} for more details on the resulting process on $\tOmega$.
\end{remark}

\end{section}

\begin{section}{Eigenfunction expansion of transition density}
\label{EigenExpansion}
In this section we follow the general scheme of proof in \cite{Et} to show the existence and the continuity of the transition density of $X_{z,z',\theta}$ with respect to $\M$.

Fix $(z,z',\theta)$ from either principal or complementary series and let $\langle\cdot,\cdot\rangle$ denote the scalar product in $L^2(\Omega,\M)$. First we will describe the spectral structure of $A_{z,z',\theta}$ as an essentially self-adjoint operator in $L^2(\Omega,\M)$.

\begin{theo}
\label{001}
1) The spectrum of the operator $A_{z,z',\theta}$ is purely discrete and is equal to $\{0, -\alpha_2, -\alpha_3, \dots\}$ where we set 
\be
\alpha_m:=m(m-1+zz'\theta^{-1}).
\ee
The multiplicity of $0$ is equal to $1$ and the multiplicity $d_m$ of $-\alpha_m$ is equal to the number of partitions of the number $m$ without parts equal to $1$.

2)We have the following decomposition into eigenspaces of $A_{z,z',\theta}$
\begin{equation*}
L^2(\Omega,\M)=\bigoplus^\infty_{m}W_m
\end{equation*}
where $m=0, 2, 3, \dots$ and $W_m$ is the eigenspace with the eigenvalue $-\alpha_m$.  Moreover, $\bigoplus^N_{m=0}W_m=\Lambda^\circ_{\leq N}$.
\end{theo}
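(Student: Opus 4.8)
The plan is to reduce the spectral statement to finite-dimensional linear algebra on the filtration $\Lambda^\circ_{\leq N}$, and then to transfer the resulting eigenbasis to $L^2(\Omega,\M)$ using the symmetry of $\A$ together with the density of $\Lambda^\circ$. Throughout I use the intrinsic grading of $\Lambda^\circ=\mathbb R[p_2^\circ,p_3^\circ,\dots]$ in which $p_i^\circ$ has degree $i$; since there is no generator of degree $1$, the admissible degrees are $0,2,3,4,\dots$, and the dimension of the degree-$m$ homogeneous component is exactly $d_m$, the number of partitions of $m$ into parts $\geq 2$. One checks that this grading induces precisely the filtration $\Lambda^\circ_{\leq n}$ of the statement.

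First I would verify that $\A$ preserves the filtration and isolate its degree-preserving (``diagonal'') part. Inspecting \eqref{Aexpressionzz}: in the first double sum $p^\circ_{i+j-1}$ lowers degree by one while $-p^\circ_ip^\circ_j$ preserves it; in the single sum the terms carrying $p^\circ_{i-1}$ lower degree (by one, or by two when $i=2$, since $p_1^\circ=1$) while those carrying $p_i^\circ$ preserve it; the last double sum lowers degree by one, two, or three. Hence on $\Lambda^\circ_{\leq N}$ we may write $\A=D_0+R$, where $R$ strictly lowers degree and, writing $\partial_i$ for $\partial/\partial p^\circ_i$,
\be
D_0=-\sum_{i,j\geq2}ij\,p^\circ_ip^\circ_j\,\partial_i\partial_j-\sum_{i\geq2}i\(i-1+\theta^{-1}zz'\)p^\circ_i\,\partial_i.
\ee
With $E=\sum_{i\geq2}i\,p^\circ_i\,\partial_i$ the Euler (degree) operator, the identity $\sum_{i,j}ij\,p^\circ_ip^\circ_j\,\partial_i\partial_j=E^2-\sum_i i^2 p^\circ_i\,\partial_i$ collapses $D_0$ to $-E^2+(1-\theta^{-1}zz')E$. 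On the degree-$m$ component this is the scalar $-m^2+(1-\theta^{-1}zz')m=-m(m-1+\theta^{-1}zz')=-\alpha_m$, so on the associated graded $\A$ is diagonal with eigenvalue $-\alpha_m$ on degree $m$.

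Next I would run the triangularization. The numbers $\alpha_m=m(m-1+\theta^{-1}zz')$ are pairwise distinct for $m\in\{0,2,3,\dots\}$ and strictly positive for $m\geq2$: in both series one has $\theta^{-1}zz'>0$ (for the principal series $zz'=|z|^2$, for the complementary series $z,z'$ lie in a common interval and share sign), so $\alpha_m$ is strictly increasing there. Because $\A=D_0+R$ is block upper-triangular for the grading, with scalar diagonal blocks $-\alpha_m\,\mathrm{Id}$ and distinct diagonal values, $\A$ is diagonalizable on each finite-dimensional $\Lambda^\circ_{\leq N}$: for every homogeneous $v$ of degree $m$ there is a unique $\tilde v=v+(\text{lower degree})$ with $\A\tilde v=-\alpha_m\tilde v$, the corrections being found recursively by inverting $D_0+\alpha_m\,\mathrm{Id}$ on strictly lower degrees (invertible since $\alpha_k\neq\alpha_m$ for $k\neq m$). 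Letting $W_m\subset\Lambda^\circ_{\leq m}$ be the eigenspace for $-\alpha_m$, we get $\dim W_m=d_m$, and comparing dimensions gives $\bigoplus_{m\leq N}W_m=\Lambda^\circ_{\leq N}$, which is the second assertion. Concretely the $W_m$ can be read off from the homogeneous components of the $\mathcal D$-eigenfunctions $\L_\lambda$ of the Remark, but this identification is not needed here.

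Finally I would transfer everything to $L^2(\Omega,\M)$. The Thoma simplex is compact, so $\Lambda^\circ\subset C(\Omega)$ lies in $L^2(\Omega,\M)$; the coordinates $p_k^\circ$ separate points of $\Omega$, so $\Lambda^\circ$ is a point-separating unital subalgebra, dense in $C(\Omega)$ and hence in $L^2(\Omega,\M)$ by Stone--Weierstrass. Since $\M$ is the symmetrizing measure, $\A$ is symmetric on $\Lambda^\circ$, so eigenvectors for the distinct real eigenvalues $-\alpha_m$ are mutually orthogonal; the dense sum $\bigoplus_m W_m$ is therefore an orthogonal decomposition of $L^2(\Omega,\M)$. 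A symmetric operator with a complete orthogonal system of eigenvectors in its domain is essentially self-adjoint, and the spectrum of its closure is the closure of the eigenvalue set; as $\alpha_m\to\infty$ this is exactly $\{0,-\alpha_2,-\alpha_3,\dots\}$, with $0$ simple (the constants $W_0$, $d_0=1$) and $-\alpha_m$ of multiplicity $d_m$. The one genuinely computational point, and the main obstacle, is the collapse of $D_0$ to $-E^2+(1-\theta^{-1}zz')E$ together with the check that every remaining term of \eqref{Aexpressionzz} strictly lowers the intrinsic degree: this is what makes $\A$ triangular and pins the eigenvalue to the total degree $m$ alone rather than to the finer combinatorics of a partition. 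The functional-analytic step is then standard.
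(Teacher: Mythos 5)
Your algebraic analysis of $\A$ on $\Lambda^\circ$ is correct and is genuinely different from the paper's: the paper simply cites Olshanski's Theorem 9.9 for the diagonalizability of $\A$ on $\Lambda^\circ$ with eigenvalues $-\alpha_m$ and multiplicities $d_m$, whereas you reprove it from scratch by splitting $\A=D_0+R$ with $R$ strictly lowering the intrinsic degree and collapsing $D_0$ to $-E^2+(1-\theta^{-1}zz')E$. I checked the bookkeeping of degrees and the Euler-operator identity; they are right, and your route has the merit of making the eigenvalue formula $-\alpha_m=-m(m-1+\theta^{-1}zz')$ transparent rather than imported.

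However, there is a genuine gap in the transfer to $L^2(\Omega,\M)$, and it is precisely the point to which the paper's own proof devotes its effort. Your multiplicity count $\dim W_m=d_m$ is carried out in $\Lambda^\circ$ viewed as the abstract polynomial ring $\mathbb R[p_2^\circ,p_3^\circ,\dots]$; to conclude that $-\alpha_m$ has multiplicity $d_m$ in $L^2(\Omega,\M)$ you must know that the natural map $\Lambda^\circ\to L^2(\Omega,\M)$ is injective, i.e.\ that no nonzero element of $\Lambda^\circ$ vanishes $\M$-almost everywhere. Algebraic independence of the $p_k^\circ$ as functions on $\Omega$ gives injectivity into $C(\Omega)$, but not into $L^2(\Omega,\M)$ unless one also knows, say, that $\M$ has full support, which you do not establish; and symmetry of $\A$ only yields orthogonality across distinct eigenvalues, saying nothing about a possible collapse of dimension inside a single eigenspace. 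The paper closes exactly this gap by lifting to the Thoma cone: if $f\in\Lambda_n$ and $f^\circ=0$ in $L^2(\Omega,\M)$, then $\widetilde f=r^nf^\circ=0$ in $L^2(\tOmega,\tM)$, contradicting the expansion of $f$ in the orthogonal basis of Laguerre symmetric functions $\L_\lambda$, whose squared norms $(z)_{\lambda,\theta}(z')_{\lambda,\theta}b_\lambda^{(\theta^{-1})}$ are nonzero for the principal and complementary series. Your argument needs this (or an equivalent) step; the rest of the functional-analytic reasoning --- density via Stone--Weierstrass and essential self-adjointness from a complete orthogonal system of eigenvectors in the domain --- is standard and fine.
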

The proof is due to G. Olshanski.
\begin{proof}
We will use the similar fact for operator $A_{z,z',\theta}$ acting on $\Lambda^\circ$. By {\cite[Theorem 9.9]{olshanski}} the action of operator $A_{z,z',\theta}$ on $\Lambda^\circ$ is diagonalizable with eigenvalues $\{0, -\alpha_2, -\alpha_3,\dots\}$ and multiplicities $d_m$ as in the theorem. 

Note that $\Lambda^\circ$ is well-defined as a subspace of $L^2(\Omega, \M)$. Indeed, we have a natural mapping from $\Lambda^\circ$ to $L^2(\Omega,\M)$. To show that it is injective, assume that $f^\circ=0$ in $L^2(\Omega,\M)$ for some $f\in\Lambda$. Multiplying homogeneous components of $f$ by powers of $p_1$, we may assume that $f$ is homogeneous of degree $n$. Then $f^\circ r^n=\widetilde {f}=0$, where $\widetilde{f}\in L^{2}(\tOmega,\tM)$ is the realisation of the symmetric function $f$ on the Thoma cone $\tilde\Omega$. But the map $f\mapsto\tilde f$ is injective, because $\{\L_\lambda\}$ is simultaneously a linear basis of $\Lambda$ and an orthogonal basis of $L^2(\tOmega,\tM)$. Hence $f=0$.

Recall that $\Lambda^\circ$ is a dense subspace of $C(\Omega)$. Then $\Lambda^\circ$ is a dense subspace of $L^2(\Omega,\M)$ and there is a decomposition of $\Lambda^\circ$ into eigenspaces of the symmetric operator $A_{z,z',\theta}$. Hence the decomposition of $\Lambda^\circ$ into eigenspaces of the symmetric operator $A_{z,z',\theta}$ extends to $L^2(\Omega,\M)$.
\end{proof}

So the pre-generator $A_{z,z',\theta}$  has an orthonormal eigenbasis $\{g_\lambda\}$, where $\lambda$ runs over all partitions without parts equal to $1$. Let $T(t)$ denote the semigroup on $C(\Omega)$ generated by $A_{z,z',\theta}$. Then for any $f\in \Lambda^\circ$ and $t>0$ we have
\be
T(t)f=\sum_{\lambda}e^{-\alpha_{|\lambda|}t}\langle f, g_\lambda\rangle g_\lambda.
\ee
Note that since $f\in \Lambda^\circ$ has a finite degree, only finite number of scalar products $\langle f, g_\lambda\rangle$ are nonzero, hence the sum above is finite. Writing the scalar product as an integral, we get
\be
T(t)f(\omega)=\sum_{\lambda}\int_{\Omega} e^{-\alpha_{|\lambda|}t}g_\lambda(\omega)g_\lambda(\sigma)f(\sigma)\M(d\sigma).
\ee
Hence if the series
\begin{equation}
\label{02}
p(t,\omega,\sigma)=\sum_{\lambda}e^{-\alpha_{|\lambda|}t}g_\lambda(\omega)g_\lambda(\sigma)
\end{equation}
absolutely converges then 
\begin{equation}
\label{003}
T(t)f(\omega)=\int_\Omega p(t,\omega,\sigma)f(\sigma)\M(d\sigma)
\end{equation}
for any $f\in\Lambda^\circ$ and $t>0$. By continuity, \eqref{003} will hold for any $f\in C(\Omega)$, so $p(t,\omega,\sigma)$ will be the transition density of the process $X_{z,z',\theta}$ with respect to $\M$. 

In order to prove the convergence of \eqref{02} we will express $p(t,\sigma,\omega)$ in terms of Jack's symmetric functions.  For $m\geq 1$ define $G_m\in C(\Omega\times\Omega)$ by 
\be
G_m(\omega, \sigma):=\sum_{|\lambda|=m}g_\lambda(\omega)g_\lambda(\sigma).
\ee
Note that $G_1=0$.

It turns out that the functions $G_m$ don't depend on the choice of eigenbasis and that they can be explicitly computed in terms of symmetric functions. Define $K_n^\circ\in C(\Omega\times\Omega)$ by
\be
K^\circ_n(\omega,\sigma):=\sum_{|\lambda|=n}b_\lambda^{(\theta^{-1})}\frac{P^\circ_\lambda(\omega;\theta)P^\circ_\lambda(\sigma;\theta)}{(z)_{\lambda,\theta}(z')_{\lambda,\theta}}.
\ee

Note that for $(z,z',\theta)$ from principal or complementary series the denominators in the definition of $K_n^\circ$ are positive real numbers.

\begin{lem}
\label{Lemma}
Let $n\geq m$ and $f\in\Lambda^\circ_{\leq m}$. Then
\be
\int_\Omega K^\circ_n(\omega,\cdot)f(\omega)\M(d\omega)-\frac{f(\cdot)}{(n-m)!(\frac{zz'}{\theta})_{m+n}}\in\Lambda^\circ_{\leq m-1}.
\ee
\end{lem}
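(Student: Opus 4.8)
The plan is to verify the statement on the basis $\{P_\mu^\circ\}$ and to reduce it to an explicit evaluation of the pairing $\int_\Omega P_\lambda^\circ P_\mu^\circ\,\M(d\omega)$, after which the resulting double sum collapses by the very definition of the $\theta$-dimension. By linearity it suffices to take $f=P_\mu^\circ$ with $|\mu|\le m$ and to show that
$\int_\Omega K^\circ_n(\omega,\cdot)P_\mu^\circ(\omega)\,\M(d\omega)-c\,P_\mu^\circ\in\Lambda^\circ_{\le m-1}$, where $c=\tfrac{1}{(n-m)!(\theta^{-1}zz')_{m+n}}$. Since I will see that the left integral always lands in $\Lambda^\circ_{\le|\mu|}$, for $|\mu|<m$ both terms already lie in $\Lambda^\circ_{\le m-1}$ and there is nothing to prove; the only genuine content is the case $|\mu|=m$.

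Next I would compute $\int_\Omega P_\lambda^\circ P_\mu^\circ\,\M(d\omega)$ for $|\lambda|=n$. Because $g\mapsto g^\circ$ is an algebra morphism, $P_\lambda^\circ P_\mu^\circ=(P_\lambda P_\mu)^\circ$ is the image of the homogeneous element $P_\lambda P_\mu\in\Lambda_{n+|\mu|}$, so \eqref{decompPoc} gives $\int_\Omega P_\lambda^\circ P_\mu^\circ\,\M=(\theta^{-1}zz')_{n+|\mu|}^{-1}\int_{\tOmega}P_\lambda P_\mu\,\tM$. I would then expand $P_\mu$ in the Laguerre basis (Proposition \ref{PexpL}, formula \eqref{a}) and integrate term by term against $P_\lambda$, invoking Corollary \ref{exprPL} for each $\int_{\tOmega}P_\lambda\L_\nu\,\tM$. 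Here one reads the scalar products of Corollary \ref{exprPL} as the plain bilinear integrals $\int_{\tOmega}\cdot\,\tM$: each such integral is rational in $(z,z')$ by \eqref{zmeasurechar}, and on the complementary series (real parameters, no conjugation) it coincides with the stated formula, hence it does so for all admissible $(z,z')$.

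Substituting this into the definition of $K_n^\circ$, the factors $b_\lambda^{(\theta^{-1})}$ and $(z)_{\lambda,\theta}(z')_{\lambda,\theta}$ cancel, and after interchanging the order of summation I obtain
\begin{multline*}
\int_\Omega K_n^\circ(\omega,\sigma)P_\mu^\circ(\omega)\,\M(d\omega)
=\frac{\left(b_\mu^{(\theta^{-1})}\right)^{-1}}{(\theta^{-1}zz')_{n+|\mu|}}\\
\times\sum_{\nu\subseteq\mu}\frac{\dim_\theta(\nu,\mu)\,b_\nu^{(\theta^{-1})}}{(|\mu|-|\nu|)!\,(n-|\nu|)!}(z)_{\mu/\nu,\theta}(z')_{\mu/\nu,\theta}\sum_{|\lambda|=n}\dim_\theta(\nu,\lambda)P_\lambda^\circ(\sigma).
\end{multline*}
The crucial simplification is the inner sum: by the defining relation \eqref{dimexp}, $\sum_{|\lambda|=n}\dim_\theta(\nu,\lambda)P_\lambda=p_1^{\,n-|\nu|}P_\nu$, and applying the morphism $g\mapsto g^\circ$ together with $p_1^\circ=1$ turns this into $P_\nu^\circ$. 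Thus the whole expression is a linear combination of the $P_\nu^\circ$ with $\nu\subseteq\mu$.

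Finally I would isolate the term $\nu=\mu$: there $\dim_\theta(\mu,\mu)=1$, the $b$-factors cancel, the skew Pochhammer symbols are empty products equal to $1$, and $(|\mu|-|\mu|)!=1$, so this term contributes exactly $\tfrac{1}{(n-|\mu|)!(\theta^{-1}zz')_{n+|\mu|}}P_\mu^\circ$; for $|\mu|=m$ this is precisely $c\,P_\mu^\circ$. Every remaining term has $\nu\subsetneq\mu$, hence $P_\nu^\circ\in\Lambda^\circ_{\le m-1}$, which yields the claim; the reduction to general $f\in\Lambda^\circ_{\le m}$ then follows since $c$ does not depend on $\mu$. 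I expect the main obstacle to be purely bookkeeping, namely matching the content Pochhammer factors $(z)_{\mu/\nu,\theta}$ coming from the two different expansions (Proposition \ref{PexpL} and Corollary \ref{exprPL}) and checking all cancellations, together with the minor analytic point of justifying the bilinear reading of the pairings for the principal series.
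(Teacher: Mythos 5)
Your proof is correct and rests on the same machinery as the paper's own argument: the Laguerre symmetric functions, the pairing of Corollary \ref{exprPL}, the lift to the Thoma cone via \eqref{decompPoc}, and the collapse of $\sum_{|\lambda|=n}\dim_\theta(\nu,\lambda)P_\lambda^\circ$ to $P_\nu^\circ$ via \eqref{dimexp}. The only difference is bookkeeping: the paper integrates $K_n$ against $\L_\mu$ and then passes to $P_\mu$ using that the top homogeneous component of $\L_\mu$ is $b_\mu^{(\theta^{-1})}P_\mu$, whereas you expand $P_\mu$ in the $\L_\nu$ and isolate the $\nu=\mu$ term --- dual versions of the same computation (and your handling of the bilinear versus sesquilinear reading of the pairing is, if anything, more careful than the paper's).
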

This key lemma is similar to \cite[Lemma 3.2]{Et} and \cite[Lemma 3.2]{FSWX} but our proof is different and is based on the Laguerre symmetric functions introduced before.
\begin{proof}

First we will prove the equivalent property for the Thoma cone. For $n>0$ define functions $K_n(x,y)$ on $\tOmega\times\tOmega$ by
\be
K_n(x,y):=\sum_{|\lambda|=n}b_\lambda^{(\theta^{-1})}\frac{P_\lambda(x;\theta)P_\lambda(y;\theta)}{(z)_{\lambda,\theta}(z')_{\lambda,\theta}}.
\ee

Then using Corollary \ref{exprPL}
\begin{multline*}
\int_{\tOmega}K_n(x,y)\L_{\mu}(x)\tM(dx)=\sum_{|\lambda|=n}b_\lambda^{(\theta^{-1})}\frac{\langle P_{\lambda},\L_{\mu}\rangle_{L^2}}{(z)_{\lambda,\theta}(z')_{\lambda,\theta}}P_{\lambda}(y;\theta)\\
=b_\mu^{(\theta^{-1})}\sum_{|\lambda|=n}\frac{\dim_\theta(\mu,\lambda)}{(|\lambda|-|\mu|)!}P_{\lambda}(y;\theta)=b_\mu^{(\theta^{-1})}\frac{1}{(n-|\mu|)!}\sum_{|\lambda|=n}\dim_\theta(\mu,\lambda)P_{\lambda}(y;\theta).
\end{multline*}.

Hence by \eqref{dimexp}
\be
\int_{\tOmega}K_n(x,y)\L_{\mu}(x)\tM(dx)=b_\mu^{(\theta^{-1})}\frac{1}{(n-|\mu|)!}p_1^{n-|\mu|}(y)P_\mu(y;\theta).
\ee
Since the elements $\L_\mu$ with $|\mu|\leq m$ form a basis of $\Lambda_{\leq m}$, for every $f\in\Lambda_{\leq m}$ we have
\be
\int_{\tOmega}K_n(x,\cdot)f(x)\tM(dx)\in p_1^{n-m}\Lambda_m.
\ee
Finally, the $m$-th homogenous component of $\L_\mu$ is $b_\mu^{(\theta^{-1})}P_\mu$, hence
\be
\int_{\tOmega}K_n(x,\cdot)P_\mu(x;\theta)\tM(dx)-\frac{1}{(n-|\mu|)!}p_1^{n-m}P_\mu(\cdot;
\theta)\in p_1^{n-m+1}\Lambda_{m-1}.
\ee
Since the elements $P_\mu$ with $|\mu|=m$ form a basis of $\Lambda_m$, we get for any $f\in\Lambda_{\leq m}$
\be
\int_{\tOmega}K_n(x,y)f(x)\tM(dx)-\frac{1}{(n-m)!}p_1^{n-m}(y)f(y)\in p_1^{n-m+1}\Lambda_{m-1}.
\ee

To deduce the original lemma from the lifted analogue just proved, recall that every nonzero point $x\in\tOmega\backslash\{\tilde 0\}$ can be identified with a pair $(r_x,\omega_x)\in\mathbb R_{>0}\times\Omega$. For any $f\in\Lambda_k$ we have $f(x)=f(r_x,\omega_x)=r_x^kf^\circ(\omega_x)\in C(\mathbb R_{> 0}\times\Omega)$. Hence
\begin{multline*}
\int_{\tOmega}K_n(x,y)f(x)\tM(dx)-\frac{p_1^{n-k}f(y)}{(n-k)!}\\=r_y^n\int_{\tOmega}K^\circ_n(\omega_x,\omega_y)f^\circ(\omega_x)r_x^{n+k}(\M\otimes\gamma_{\frac{zz'}{\theta}})(dx)-\frac{r_y^nf^\circ(\omega_y)}{(n-k)!}\\=r_y^n\left((zz'\theta^{-1})_{n+k}\int_{\Omega}K_n^\circ(\omega,\cdot)f^\circ(\omega)\M(d\omega)-\frac{f^\circ(\cdot)}{(n-k)!}\right)\in r_y^n\Lambda_{\leq k-1}^\circ.
\end{multline*}
The required identity follows from fixing $r_y=1$.
\end{proof}

\begin{lem}
\label{ccc}
 Let $f_n$ and $g_n$ be two sequences from a vector space over $\mathbb C$. Assume that for some $c\in \mathbb C$ the following holds for any $n\geq 0$
\be
f_n=\sum_{m\geq 0}\frac{g_m}{(c)_{n+m}(n-m)!}.
\ee
Then
\begin{equation}
\label{aab}
g_m=\sum_{n=0}^m(-1)^{m-n}\frac{(c+2m-1)(c)_{m+n-1}}{(m-n)!}f_n.
\end{equation}
\end{lem}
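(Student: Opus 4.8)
The plan is to prove this by a direct computation showing that the candidate inversion formula, when substituted into the defining relation, reproduces $g_m$. The statement is a pure linear-algebra inversion problem: we are given an infinite lower-triangular-type system expressing $f_n$ in terms of $g_0,\dots$ (with the $(n-m)!$ factor vanishing the terms $m>n$, so in fact $f_n$ depends only on $g_0,\dots,g_n$), and we must invert it. Since $f_n=\sum_{m=0}^n g_m/[(c)_{n+m}(n-m)!]$ is genuinely triangular with nonzero diagonal coefficient $1/(c)_{2m}$, the inverse exists and is unique; it remains only to verify that the explicit formula \eqref{aab} is that inverse.

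**First I would** set up the verification cleanly. Fix $m$ and substitute the proposed expression for $f_n$ into the right-hand side of \eqref{aab}, yielding a double sum
\begin{equation*}
\sum_{n=0}^m(-1)^{m-n}\frac{(c+2m-1)(c)_{m+n-1}}{(m-n)!}\sum_{k\geq0}\frac{g_k}{(c)_{n+k}(n-k)!}.
\end{equation*}
Interchanging the order of summation, the coefficient of each $g_k$ becomes
\begin{equation*}
(c+2m-1)\sum_{n}(-1)^{m-n}\frac{(c)_{m+n-1}}{(m-n)!\,(c)_{n+k}\,(n-k)!},
\end{equation*}
where $n$ ranges over $k\leq n\leq m$. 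The goal is to show this inner coefficient equals the Kronecker delta $\delta_{k,m}$. The plan is to simplify the ratio $(c)_{m+n-1}/(c)_{n+k}$ using Pochhammer identities into a single Pochhammer symbol (depending on whether $m-1\geq k$, which holds in the relevant range), reindex by $j=n-k$ so the sum runs from $j=0$ to $m-k$, and recognize the resulting alternating sum as an evaluation of a terminating hypergeometric series.

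**The hard part will be** identifying and evaluating that terminating sum. After reindexing, the $g_k$-coefficient should reduce to a balanced or nearly-balanced ${}_2F_1$ or ${}_3F_2$ of argument $1$, whose value is given by a classical summation theorem (most likely the Chu–Vandermonde identity, which evaluates a terminating ${}_2F_1(-p,b;c;1)$, or the Pfaff–Saalsch\"utz theorem for a ${}_3F_2$). The $(c+2m-1)$ prefactor is precisely the normalizing factor needed so that the closed form collapses to $1$ when $k=m$ and to $0$ when $k<m$; the vanishing for $k<m$ comes from a factor like $(c+2m-1)(m-k-1)!/\text{something}$ producing a Pochhammer symbol $(\,\cdot\,)_{m-k}$ that contains a zero, or from the hypergeometric sum itself evaluating to $0$. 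I would carry out the reindexing carefully, massage the Pochhammer products, match the result against the standard ${}_2F_1(1)$ evaluation, and check the two boundary cases $k=m$ and $k<m$ separately to confirm the delta. The argument is purely formal, so no convergence or analytic issues arise — everything lives in the given vector space over $\mathbb{C}$ with finitely many nonzero terms for each fixed $n$ and $m$.
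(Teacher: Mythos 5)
Your approach is correct and is essentially the argument the paper defers to (it only cites Ethier's Lemma 3.3, which is this same direct inversion check, described there as ``similar to inclusion--exclusion''). The one step you leave open --- evaluating the alternating inner sum --- closes more simply than a hypergeometric summation theorem: for $k\le m$ the coefficient of $g_k$ after interchanging sums is
\begin{equation*}
(c+2m-1)\sum_{n=k}^{m}(-1)^{m-n}\frac{(c)_{m+n-1}}{(m-n)!\,(c)_{n+k}\,(n-k)!}
=\frac{c+2m-1}{N!}\sum_{j=0}^{N}(-1)^{N-j}\binom{N}{j}(c+2k+j)_{N-1},\qquad N=m-k,
\end{equation*}
which for $N\ge1$ is $\frac{c+2m-1}{N!}$ times the $N$-th finite difference of a polynomial of degree $N-1$ in $j$, hence $0$, while for $N=0$ it equals $(c+2m-1)\,(c)_{2m-1}/(c)_{2m}=1$; this confirms the Kronecker delta and completes your verification.
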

The proof is given in \cite[Lemma 3.3]{Et}, and it is similar to the inclusion-exclusion formula.

\begin{prop}
\label{propaaa}
For every $m>0$ the following holds
\begin{equation}
\label{aaa}
G_m=\sum_{n=0}^m(-1)^{m-n}\frac{(\frac{zz'}{\theta}+2m-1)(\frac{zz'}{\theta})_{m+n-1}}{(m-n)!}K_n^\circ.
\end{equation}
\end{prop}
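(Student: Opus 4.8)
The plan is to reinterpret both families of kernels as integral operators on $L^2(\Omega,\M)$ and to diagonalise the operator attached to $K_n^\circ$ through the eigenspace decomposition of Theorem \ref{001}. Write $\mathcal G_m$ and $\mathcal K_n$ for the operators $f\mapsto\int_\Omega G_m(\omega,\cdot)f(\omega)\M(d\omega)$ and $f\mapsto\int_\Omega K^\circ_n(\omega,\cdot)f(\omega)\M(d\omega)$. Since $\{g_\lambda:|\lambda|=m\}$ is an orthonormal basis of $W_m$, the operator $\mathcal G_m$ is exactly the orthogonal projection onto $W_m$. The kernel $K_n^\circ$ is symmetric and (for principal or complementary series, where the denominators $(z)_{\lambda,\theta}(z')_{\lambda,\theta}$ are positive reals) real, and it is continuous on the compact space $\Omega\times\Omega$; hence $\mathcal K_n$ is a bounded self-adjoint finite-rank operator whose range lies in $\Lambda^\circ_{\leq n}$. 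I will prove that $\mathcal K_n=\sum_{m=0}^n c_{m,n}\,\mathcal G_m$, where $c_{m,n}=\bigl((n-m)!(\tfrac{zz'}{\theta})_{m+n}\bigr)^{-1}$, and then invert by Lemma \ref{ccc}.

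The first step is to check that $\mathcal K_n$ preserves every filtration piece $\Lambda^\circ_{\leq k}$. For $k\leq n$ this is the content of Lemma \ref{Lemma}: the top part of $\mathcal K_n f$ equals $c_{k,n}f$ and the remainder lies in $\Lambda^\circ_{\leq k-1}$; for $k\geq n$ it is automatic because $\mathcal K_n f\in\Lambda^\circ_{\leq n}\subseteq\Lambda^\circ_{\leq k}$. Because $\mathcal K_n$ is self-adjoint and each $\Lambda^\circ_{\leq k}$ is finite-dimensional (hence closed), $\mathcal K_n$ preserves each $(\Lambda^\circ_{\leq k})^\perp$ as well. Using the identification $\bigoplus_{j\le k}W_j=\Lambda^\circ_{\leq k}$ from Theorem \ref{001}, it follows that $\mathcal K_n$ preserves $W_m=\Lambda^\circ_{\leq m}\cap(\Lambda^\circ_{\leq m-1})^\perp$ for every $m$.

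The second step computes the scalar by which $\mathcal K_n$ acts on each $W_m$. If $m>n$, then $W_m\perp\Lambda^\circ_{\leq n}\supseteq\operatorname{range}(\mathcal K_n)$, so $\mathcal K_n$ annihilates $W_m$. If $m\leq n$ and $g\in W_m$, Lemma \ref{Lemma} gives $\mathcal K_n g-c_{m,n}g\in\Lambda^\circ_{\leq m-1}$; but $\mathcal K_n g$ and $g$ both lie in $W_m$, so this difference lies in $W_m\cap\Lambda^\circ_{\leq m-1}=\{0\}$, whence $\mathcal K_n g=c_{m,n}g$. This establishes $\mathcal K_n=\sum_{m=0}^n c_{m,n}\mathcal G_m$ as operators. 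Since a finite-rank integral operator with continuous kernel determines that kernel uniquely (both $K_n^\circ$ and $\sum_m c_{m,n}G_m$ live in $\Lambda^\circ\otimes\Lambda^\circ$, and $\Lambda^\circ$ is dense in $L^2(\Omega,\M)$), we obtain the kernel identity
\be
K_n^\circ=\sum_{m\geq0}\frac{G_m}{(\tfrac{zz'}{\theta})_{n+m}\,(n-m)!},
\ee
valid for all $n\geq0$, where the terms with $m>n$ vanish through the factor $1/(n-m)!$. Applying Lemma \ref{ccc} in the vector space $C(\Omega\times\Omega)$ with $f_n=K_n^\circ$, $g_m=G_m$, and $c=\tfrac{zz'}{\theta}$ then yields precisely \eqref{aaa}.

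I expect the only delicate points to be the two "soft" passages rather than any computation: confirming that $\mathcal K_n$ genuinely respects the orthogonal eigenspace decomposition (which rests on combining self-adjointness with the filtration-preservation extracted from Lemma \ref{Lemma}), and the passage from the operator identity to the pointwise kernel identity, handled by continuity of the kernels on the compact space $\Omega$ together with density of $\Lambda^\circ$. Once the kernel identity is in place, the remaining inversion is purely formal via Lemma \ref{ccc}.
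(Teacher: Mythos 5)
Your proof is correct and follows essentially the same route as the paper: both arguments use Lemma \ref{Lemma} together with the orthogonal eigenspace decomposition of Theorem \ref{001} to establish the expansion $K_n^\circ=\sum_{m}G_m\big((\tfrac{zz'}{\theta})_{n+m}(n-m)!\big)^{-1}$, and then invert via Lemma \ref{ccc}. The only difference is presentational — you diagonalise the integral operator $\mathcal K_n$ on the spaces $W_m$, whereas the paper computes the equivalent matrix coefficients $\int_{\Omega\times\Omega}K_n^\circ(\omega,\sigma)g_\lambda(\omega)g_\mu(\sigma)\,\M(d\omega)\M(d\sigma)$ directly.
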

\begin{proof}

Recall that $\Lambda^\circ_{\leq m}$ is spanned by the elements $g_\mu$ with $|\mu|\leq m$, hence if $|\lambda|>m$ then $g_\lambda$ is orthogonal to $\Lambda^\circ_{\leq m}$. Then by Lemma \ref{Lemma}
\be
\int_{\Omega\times\Omega} K^\circ_n(\omega,\sigma)g_\lambda(\omega)g_\mu(\sigma)\M(d\omega)\M(d\sigma)=\frac{\delta_{\lambda,\mu}}{(n-|\lambda|)!(\frac{zz'}{\theta})_{|\lambda|+n}}.
\ee 
Since $\{g_\lambda(\omega)g_\mu(\sigma)\}_{\lambda,\mu}$ where $\lambda$ and $\mu$ run over partitions with no part equal to $1$ is an orthonormal basis of $L^2(\Omega\times\Omega, \M\otimes\M)$ we have
\begin{equation*}
K^\circ_n=\frac{1}{n!(\frac{zz'}{\theta})_{n}}+\sum_{m\geq 2}\frac{G_m}{(\frac{zz'}{\theta})_{n+m}(n-m)!}.
\end{equation*}
Together with Lemma \ref{ccc} this implies \eqref{aaa}.
\end{proof}

Recall that we want to prove the convergence of
\be
p(t,\sigma,\omega)=\sum_{\lambda}e^{-\alpha_{|\lambda|}t}g_\lambda(\omega)g_\lambda(\sigma)=1+\sum_{m\geq2}e^{-\alpha_mt}G_m(\omega,\sigma).
\ee
We will use Proposition \ref{propaaa} to give a sufficiently strong upper bound on $G_m$.

\begin{prop}
\label{bound}
There exist $C>0$ and $d>0$ such that
\be
||G_m||\leq Cm^{dm}
\ee
where $||\cdot||$ is the sup norm on $C(\Omega\times\Omega)$.
\end{prop}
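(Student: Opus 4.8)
The plan is to feed the explicit expansion of Proposition~\ref{propaaa} into the triangle inequality and to control the two ingredients separately: the Pochhammer coefficients, which carry all of the super-exponential growth, and the norms $\|K_n^\circ\|$, which I will show grow only exponentially. Concretely,
\[
\|G_m\|\le\sum_{n=0}^m\frac{\big|\tfrac{zz'}{\theta}+2m-1\big|\,\big|(\tfrac{zz'}{\theta})_{m+n-1}\big|}{(m-n)!}\,\|K_n^\circ\|.
\]
Since $zz'>0$ for both the principal and the complementary series (it is the shape parameter of the gamma factor in $\tM$), the symbol $(\tfrac{zz'}{\theta})_{m+n-1}$ is a product of at most $2m-1$ positive factors, each $\le\tfrac{zz'}{\theta}+2m\le C_1 m$; hence $|(\tfrac{zz'}{\theta})_{m+n-1}|\le (C_1m)^{2m}$, the prefactor is $O(m)$, and there are $m+1$ terms. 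Thus once I establish $\|K_n^\circ\|\le C_2^{\,n}$ for a constant $C_2$, I obtain $\|G_m\|\le (m+1)\,C_1m\,(C_1m)^{2m}C_2^{\,m}$, which is $\le Cm^{dm}$ (for instance $d=3$) after absorbing the polynomial and exponential factors into $m^{\varepsilon m}$ for large $m$.

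The heart of the matter is a uniform bound on the Jack functions. Applying the algebra morphism $\Lambda\to C(\Omega)$ to the $\mu=\varnothing$ case of \eqref{dimexp}, namely $p_1^{\,n}=\sum_{|\lambda|=n}\dim_\theta(\lambda)P_\lambda$, and using $p_1^\circ=1$ gives $\sum_{|\lambda|=n}\dim_\theta(\lambda)P^\circ_\lambda(\omega)=1$ for every $\omega\in\Omega$. As $\dim_\theta(\lambda)>0$ and the $P_\lambda^\circ$ are nonnegative on $\Omega$ (established at the end of Section~\ref{thomaS}), every summand is $\le 1$, so
\[
0\le P^\circ_\lambda(\omega)\le \frac{1}{\dim_\theta(\lambda)},\qquad \omega\in\Omega,\ |\lambda|=n.
\]
Inserting this into the definition of $K_n^\circ$, and recalling that its coefficients are positive for the series under consideration, yields
\[
\|K_n^\circ\|\le\sum_{|\lambda|=n}\frac{b^{(\theta^{-1})}_\lambda}{(z)_{\lambda,\theta}(z')_{\lambda,\theta}\,\dim_\theta(\lambda)^2}.
\]

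It remains to bound each factor by an exponential in $n$ and to note that the number of terms $p(n)$ is subexponential. By the hook-length formula $\dim_\theta(\lambda)=n!\big/\prod_{\square\in\lambda}(a(\square)+\theta\,l(\square)+1)$ (see \cite{macdonald}), every hook lies in $[1,(1+\theta)n]$, so $\dim_\theta(\lambda)\ge n!/((1+\theta)n)^n\ge((1+\theta)e)^{-n}$ by Stirling, giving $\dim_\theta(\lambda)^{-2}\le C_3^{\,n}$. The norms $b^{(\theta^{-1})}_\lambda=\prod_{\square}\frac{\theta^{-1}a(\square)+l(\square)+1}{\theta^{-1}a(\square)+l(\square)+\theta^{-1}}$ are products of $n$ factors each lying between two positive constants depending only on $\theta$, so $b^{(\theta^{-1})}_\lambda\le C_4^{\,n}$. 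Finally each content $c_\theta(\square)$ lies in the lattice $\mathbb Z+\theta\mathbb Z$, so $|z+c_\theta(\square)|$ is bounded below --- by $|\operatorname{Im}z|$ in the principal series and by the distance from $z$ to the lattice in the complementary series --- whence $(z)_{\lambda,\theta}(z')_{\lambda,\theta}\ge C_5^{-n}$. Combining, $\|K_n^\circ\|\le p(n)(C_3C_4C_5)^n\le C_2^{\,n}$, as required.

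The one genuinely delicate point is the lower bound $\dim_\theta(\lambda)\ge c^{\,n}$: the relation $\sum_\lambda\dim_\theta(\lambda)P^\circ_\lambda=1$ only controls the products $\dim_\theta(\lambda)P^\circ_\lambda$, not $\dim_\theta(\lambda)$ by itself, so the explicit hook-length formula (or an equivalent branching estimate) seems unavoidable. Everything else --- the exponential bounds on $b^{(\theta^{-1})}_\lambda$ and on $1/\big((z)_{\lambda,\theta}(z')_{\lambda,\theta}\big)$, and the factorial growth of the Pochhammer coefficients --- is then routine.
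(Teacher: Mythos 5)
Your argument is correct and follows essentially the same route as the paper: the triangle inequality applied to the expansion of Proposition~\ref{propaaa}, the bound $P^\circ_\lambda\le\dim_\theta(\lambda)^{-1}$ from nonnegativity and $\sum_{|\lambda|=n}\dim_\theta(\lambda)P^\circ_\lambda=1$, the hook-length formula for $\dim_\theta(\lambda)$, the lattice-distance lower bound for $(z)_{\lambda,\theta}$, and the exponential bound on $b_\lambda^{(\theta^{-1})}$. The only (harmless) difference is bookkeeping: you get a genuinely exponential bound $\|K_n^\circ\|\le C_2^{\,n}$ by using the subexponential growth of $p(n)$, whereas the paper crudely uses $\rho(n)\le n!$ and settles for $\|K_n^\circ\|\le Cn^{3n}$, ending with $m^{6m}$ instead of your $m^{3m}$ --- both suffice for the statement.
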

\begin{proof}
Proposition \ref{propaaa} expresses $G_m$ in terms of the kernels $K^\circ_n$ which in turn are defined via functions $P_\lambda^\circ$ , so first we will give an upper bound for $P^\circ_\lambda$. 
From \eqref{dimexp} we have
\be
1=(p_1^n)^\circ=\left(\sum_{|\lambda|=n}\dim_\theta(\lambda)P_\lambda(\omega;\theta)\right)^\circ=\sum_{|\lambda|=n}\dim_\theta(\lambda)P_\lambda^\circ(\omega;\theta).
\ee
As pointed out in Section \ref{thomaS} the functions $P^\circ(\omega;\theta)$ are nonnegative, hence
\be
P^\circ_\lambda(\omega;\theta)\leq\dim_\theta(\lambda)^{-1}.
\ee
It is known (see \cite[section 5]{OkOl}) that 
\be
\dim_\theta(\lambda)=\frac{|\lambda|!}{H_\theta(\lambda)},
\ee
where 
\be
H_\theta(\lambda):=\prod_{(i,j)\in\lambda}(\lambda_i-j+\theta(\lambda'_j-i)+1).
\ee
Note that
\be
\lambda_i-j+\theta(\lambda'_j-i)+1\leq |\lambda|+\theta|\lambda|=|\lambda|(1+\theta),
\ee
hence
\be
P^\circ_\lambda(\omega;\theta)\leq\dim_\theta(\lambda)^{-1}=\frac{H_\theta(\lambda)}{|\lambda|!}\leq\frac{|\lambda|^{|\lambda|}(1+\theta)^{|\lambda|}}{|\lambda|!}.
\ee

In order to estimate $K^\circ_n$ we need bounds for $b_\lambda$ and $(z)_{\lambda,\theta}$. Recall that $(z,z')$ are from principal or complementary series hence there exists $\delta_z>0$ such that $|z+k+\theta l|>\delta_z$ for any $k,l\in\mathbb Z$, so $|(z)_{\lambda,\theta}|>\delta_z^{|\lambda|}$. 

For $b_\lambda^{(\theta^{-1})}$ we use \cite[VI, 10.10]{macdonald}, which gives
\be
b_\lambda^{(\theta^{-1})}=\prod_{(i,j)\in\lambda}\frac{\lambda_i-j+\theta(\lambda'_j-i+1)}{\lambda_i-j+1+\theta(\lambda'_j-i)}\leq (\theta+1)^{|\lambda|}.
\ee
This implies
\be
K_n^\circ(\omega,\sigma)\leq \rho(n)\frac{n^{2n}(1+\theta)^{3n}}{(n!)^2\delta_z^{n}\delta_{z'}^{n}},
\ee
where $\rho(n)$ is the number of partitions of the number $n$. Since $\rho(n)$ equals the number of conjugacy classes in the symmetric group of order $n$, we have $\rho(n)\leq n!$. Hence there exists some $C>0$ such that
\be
K_n^\circ(\omega, \sigma)\leq Cn^{3n}.
\ee
Finally, for $G_m$ we have
\begin{multline*}
|G_m|\leq\sum_{n=0}^m\frac{|\frac{zz'}{\theta}+2m-1|(\frac{zz'}{\theta})_{m+n-1}}{(m-n)!}|K_n^\circ|\leq C\sum_{n=0}^m(2m+\frac{zz'}{\theta})^{2m}n^{3n}\\
\leq  mC(2m+\frac{zz'}{\theta})^{2m}m^{3m}\leq Dm^{6m}
\end{multline*}
for some constant $D>0$.
\end{proof}

\begin{theo}
\label{result}
1) The process $X_{z,z',\theta}$ has a continuous transition density with respect to $\M$, that is there is a continuous function $p(t,\sigma,\omega)$ on $\mathbb R_{>0}\times\Omega\times\Omega$ such that for any $f\in C(\Omega)$ and $t>0$ the following identity holds
\be
T(t)f(\sigma)=\int_{\Omega}p(t,\sigma,\omega)f(\omega)\M(d\omega).
\ee

2) The transition density $p(t,\sigma,\omega)$ is given by the following series converging in the supremum norm
\be
p(t,\sigma,\omega)=1+ \sum_{n=0}^{\infty}\sum_{|\lambda|=n}C_n(t)\frac{P^\circ_\lambda(\sigma)P^\circ_\lambda(\omega)}{(z)_{\lambda.\theta}(z')_{\lambda,\theta}},
\ee
where the coefficients $C_n(t)$ are defined by
\be
C_n(t)=\sum_{\mathclap{\substack{m\geq n\\m\geq2}}}e^{-\alpha_mt}(-1)^{m-n}\frac{(\frac{zz'}{\theta}+2m-1)(\frac{zz'}{\theta})_{m+n-1}}{(m-n)!}
\ee
with $\alpha_m:=m(m-1+zz'\theta^{-1})$.
\end{theo}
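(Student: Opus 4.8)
The plan is to assemble the two preceding propositions into the kernel $p(t,\sigma,\omega)$ and to check directly that it has the required properties. Starting from the formal eigenfunction expansion $p(t,\sigma,\omega)=1+\sum_{m\geq2}e^{-t\alpha_m}G_m(\sigma,\omega)$, the first task is convergence. Proposition \ref{bound} gives $\|G_m\|\leq Cm^{dm}$, while $\alpha_m=m(m-1+zz'\theta^{-1})$ grows quadratically in $m$; hence each summand is bounded in the supremum norm by $Ce^{-t\alpha_m}m^{dm}=C\exp(-t\alpha_m+dm\log m)$. Since the quadratic term $t\alpha_m$ eventually dominates $dm\log m$, the series is summable, and on any half-line $t\in[t_0,\infty)$ with $t_0>0$ the bound $e^{-t\alpha_m}\leq e^{-t_0\alpha_m}$ makes the convergence uniform in $t$ as well. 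As each $G_m$ lies in $C(\Omega\times\Omega)$, the uniform limit $p$ is continuous on $[t_0,\infty)\times\Omega\times\Omega$, and since $t_0>0$ is arbitrary, $p$ is continuous on $\mathbb R_{>0}\times\Omega\times\Omega$.

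To identify $p$ as the transition density I would invoke \eqref{003}, which for $f\in\Lambda^\circ$ gives $T(t)f(\sigma)=\int_\Omega p(t,\sigma,\omega)f(\omega)\M(d\omega)$ because the spectral sum defining $T(t)f$ is then finite. Both sides are bounded linear functionals of $f$ with respect to the supremum norm: the left-hand side because $T(t)$ is a Feller contraction, the right-hand side because $\Omega$ is compact and $p(t,\cdot,\cdot)$ is therefore bounded. Since $\Lambda^\circ$ is a point-separating subalgebra of $C(\Omega)$ containing the constants, it is dense by the Stone--Weierstrass theorem, so the identity extends from $\Lambda^\circ$ to all of $C(\Omega)$ by continuity. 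This establishes part~1.

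For the explicit series in part~2, substitute the formula of Proposition \ref{propaaa} for each $G_m$ into the sum for $p$ and interchange the order of summation over $m$ and $n$; this interchange is justified by the absolute, uniform convergence established above together with the finiteness of the inner sum over $0\leq n\leq m$. Collecting the coefficient of each $K_n^\circ$ yields exactly
\be
C_n(t)=\sum_{\substack{m\geq n\\m\geq2}}e^{-t\alpha_m}(-1)^{m-n}\frac{(\frac{zz'}{\theta}+2m-1)(\frac{zz'}{\theta})_{m+n-1}}{(m-n)!},
\ee
and inserting the definition of $K_n^\circ$ produces the stated double series over partitions $\lambda$.

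The genuinely analytic step is the convergence estimate of the first paragraph; everything else is a density argument or a rearrangement of finite sums. The point requiring care is that the factorial-type growth $m^{dm}$ of $\|G_m\|$ is controlled \emph{only} because $\alpha_m$ grows like $m^2$, so that the Gaussian-type decay $e^{-t\alpha_m}$ outpaces $e^{dm\log m}$. This is where the positivity of $t$ and the explicit quadratic form of the eigenvalues $\alpha_m$ enter decisively, and it is also what forces the uniformity in $t$ needed for joint continuity of $p$ on $\mathbb R_{>0}\times\Omega\times\Omega$.
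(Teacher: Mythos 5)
Your proposal is correct and follows essentially the same route as the paper: the heart of the matter is the bound $\|G_m\|\leq Cm^{dm}$ from Proposition \ref{bound} against the quadratic growth of $\alpha_m$, which yields uniform absolute convergence on $t\in[t_0,\infty)$, after which the identification of $p$ as the density and the rearrangement into the $C_n(t)$ series are exactly the paper's (largely pre-announced) steps. The only cosmetic difference is that the paper makes the domination explicit via $\log x\leq x-1$, and your appeal to Stone--Weierstrass and to the absolute bound on $\sum_n|\cdot|\,\|K_n^\circ\|$ from the proof of Proposition \ref{bound} spells out details the paper leaves implicit.
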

\begin{proof}
As noted above, it is enough to prove that for any $\ve>0$ the series
\be
p(t,\sigma,\omega)=1+\sum_{m\geq 2}e^{-t\alpha_m}G_m(\sigma,\omega)
\ee
uniformly absolutely converges for $t\in(\ve,\infty)$. From Proposition \ref{bound} we have
\be
||e^{-t\alpha_m}G_m||\leq Ce^{-\ve m(m-1)+dm\log(m)}.
\ee
Note that 
\be
\log \(\frac{\ve m}{2d}\)\leq \frac{\ve m}{2d}-1,
\ee
hence
\be
dm\log m\leq \frac{\ve}{2}m^2-dm+dm\log\(\frac{\ve}{2d}\).
\ee
So we have
\be
||e^{-t\alpha_m}G_m||\leq Ce^{-c_1 m^2-c_2m}
\ee
for $c_1>0$ and the desired convergence follows. 
\end{proof}
\end{section}

\begin{section}{Ergodic theorem for the process with Jack parameter}

As shown in {\cite[Theorem 9.10(iii)]{olshanski}} the diffusion with Jack's parameter is ergodic in the following sense
\be
\lim_{t\to\infty}||T(t)f-\int_\Omega f(\omega)\M(d\omega)||_{\mathrm{sup}}=0
\ee

Following \cite[Remark 3.6]{Et}, the expression for the transition density can be used to strengthen the ergodic theorem. 
\begin{cor}
There is a constant $K$ depending only on $z,z',\theta$ such that for any $\sigma\in\Omega$ we have
\be
||P(t,\sigma,\cdot)-\M||_{\mathrm{var}}\leq Ke^{-t\alpha_2},
\ee
where $P(t,\sigma,\cdot)$ is the transition function of the process $X_{z,z',\theta}$, $||\cdot||_{\mathrm{var}}$ is the total variation and $\alpha_m=m(m-1+zz'\theta^{-1})$.
\end{cor}
\begin{proof}
Recall that for measures $\mu,\nu$ such that $\nu$ has density $f$ with respect to $\mu$ the following identity holds
\be
||\mu-\nu||_{\mathrm{var}}=\int |1-f|d\mu.
\ee
In our case $P(t,\sigma,\cdot)$ has the density $p(t,\sigma,\cdot)$ with respect to $\M$, so
\be
||P(t,\sigma,\cdot)-\M||_{\mathrm{var}}=\int_{\Omega} |p(t,\sigma,\omega)-1|\M(d\omega).
\ee
Using the expression for $p(t,\sigma,\omega)$ we have for $t>0$
\be
|p(t,\sigma,\omega)-1|\leq e^{-t\alpha_2}\sum_{m\geq 2}|G_m|e^{-t(\alpha_m-\alpha_2)}.
\ee
Moreover, the sum in the right hand side converges for any $t>0$ and is decreasing in $t$. Then for any $t_0>0$ and $t\geq t_0$ we have
\be
||P(t,\sigma,\cdot)-\M||_{\mathrm{var}}\leq K_0e^{-t\alpha_2},
\ee
where 
\be
K_0=\sum_{m\geq 2}||G_m||_{\mathrm{sup}}e^{-t(\alpha_m-\alpha_2)}.
\ee
Finally, note that the variance distance between any two measures is less or equal to $2$, hence for $t\geq 0$ and $K=K_0+2e^{-t\alpha_2}$ we have
\be
||P(t,\sigma,\cdot)-\M||_{\mathrm{var}}\leq Ke^{-t\alpha_2}.
\ee
\end{proof}

\begin{remark}
By giving a bound of $\int_{\Omega}|G_2(\sigma,\omega)|\M(d\omega)$ we may further improve the rate of convergence. Note that $K_n^\circ(\sigma,\omega)>0$ for any $\sigma,\omega\in\Omega$ so 
\be
|G_2|=\left|\sum_{n=0}^2(-1)^{2-n}\frac{(\frac{zz'}{\theta}+3)(\frac{zz'}{\theta})_{n+1}}{(2-n)!}K_n^\circ\right|\leq \sum_{n=0}^2\frac{(\frac{zz'}{\theta}+3)(\frac{zz'}{\theta})_{n+1}}{(2-n)!}K_n^\circ.
\ee
Note that from the Lemma \ref{Lemma} we have
\be
\int_{\Omega}K_n^\circ(\sigma,\omega)\M(d\omega)=\frac{1}{n!(\frac{zz'}{\theta})_n}.
\ee
Hence
\begin{multline*}
\int_{\Omega}|G_2(\sigma,\omega)|\M(d\omega)\leq\sum_{n=0}^2\frac{(\frac{zz'}{\theta}+3)(\frac{zz'}{\theta})_{n+1}}{(2-n)!}\frac{1}{n!(\frac{zz'}{\theta})_n}\\
=\sum_{n=0}^2\frac{(\frac{zz'}{\theta}+3)(\frac{zz'}{\theta}+n)}{(2-n)!n!}=2\(\frac{zz'}{\theta}+3\)\(\frac{zz'}{\theta}+1\).
\end{multline*}
Then for some constant $C>0$ depending only on $z,z'$ and $\theta$ the following holds
\be
||P(t,\sigma,\cdot)-\M||_{\mathrm{var}}\leq2\(\frac{zz'}{\theta}+1\)\(\frac{zz'}{\theta}+3\)e^{-t\alpha_2}+Ce^{-t\alpha_3}.
\ee

\end{remark}

\end{section}

\section*{Appendix A: Match with \cite{BaFo}.}

The purpose of this section is to match the notation appearing in Section 4 of \cite{BaFo} with our notation. 

As noted before, our parameter $\theta$ is the inverse of the conventional parameter $\alpha$, parameter $a$ from \cite{BaFo} is equal to $c-1$ used in our work. Then the operator $\tilde H^{(L)}$ from \cite{BaFo} is equal to our operator $D^{(L)}_N$.

Jack's polynomials $C_\kappa^{(\alpha)}$ used in \cite{BaFo} have different normalisation from ours, defined by
\be
p_1^n=\sum_{|\kappa|=n}C_{\kappa}^{(\alpha)}.
\ee

Comparing with \eqref{dimtheta}, we have
\be
C_{\kappa}^{(\alpha)}(x)=\dim_{\theta}(\kappa)P_{\kappa}(x;\theta).
\ee

Our Laguerre polynomials $L^{c,N,\theta}_\kappa$ differ from polynomials $L^a_{\kappa}(x_1,\dots, x_N;\alpha)$ used in \cite{BaFo} by the following rescaling
\be
L^{a}_\kappa(x_1,\dots,x_N;\alpha)=\frac{1}{\dim_\theta(\kappa)(N\theta)_\kappa}L^{c,N,\theta}_\kappa.
\ee

Finally, to get Propositions \ref{Lagpolex}, \ref{ortho} from \cite[Propositions 4.3, 4.10]{BaFo}, we also use the binomial formula from \cite{OkOl} in the following form
\be
\binom{\kappa}{\sigma}_\theta\frac{C^{(\alpha)}_\kappa(x_1,\dots, x_N)}{C^{(\alpha)}_\kappa(1, \dots, 1)}=\frac{|\kappa|!}{(|\kappa|-|\sigma|!)}\frac{\dim_\theta(\sigma,\kappa)}{\dim_\theta(\kappa)(N\theta)_\sigma}Q_\sigma(x_1,\dots,x_N;\theta).
\ee

\section*{Appendix B: Degeneration to Petrov's diffusion}
Taking the limit regime
\be
\theta\to0,\quad zz'\to0,\quad\theta^{-1}zz'\to\tau,\quad z+z'\to-a,
\ee
we can formally degenerate the process with the Jack parameter to Petrov's generalisation of Ethier-Kurtz diffusion \cite{Pe1}, see \cite[Remark 9.12]{olshanski}. In this regime the Thoma simplex degenerates to Kingman's simplex $\overline{\nabla}_\infty$ formed by sequences $x_1\geq x_2\geq\dots\geq0$ such that
\be
\sum_{i\geq1} x_i\leq1.
\ee
The functions $p_k^\circ$ degenerate to the moment coordinates $q_k=\sum_ix_i^k$, the probability pregenerator is given by
\be
\sum_{i,j\geq1}(i+1)(j+1)(q_{i+j}-q_iq_j)\frac{\partial^2}{\partial q_i\partial q_j}+\sum_{i\geq1}(i+1)[(i-a)q_{i-1}-(i+\tau)q_i]\frac{\partial}{\partial q_i},
\ee 
and the invariant measure $\M$ degenerates to the Poisson-Dirichlet distribution $PD(\alpha,\tau)$. 

Recall that $m_i(\mu)$ is the number of rows of length $i$ in diagram $\mu$. The Laguerre functions degenerate to
\be
\mathfrak{L}_{\lambda}^{PD(\alpha,\tau)}=\sum_{\mu\subseteq\lambda}(-1)^{|\lambda|-|\mu|}\frac{\dim_0(\mu,\lambda)}{(|\lambda|-|\mu|)!}\left(\prod_{x\in\lambda/\mu}\mathfrak{q}_{\alpha\tau}(x)\right)r_\mu m_\mu(x;\theta).
\ee
Here $\dim_0(\mu,\lambda)$ is the weighted sum of increasing paths from $\mu$ to $\lambda$ in Kingman graph described in \cite[Section 4]{KOO} and the coefficient $r_\mu$ is defined by 
\be
r_\mu=\frac{\prod_im_i(\mu)}{\prod_i\mu_i},
\ee
and $\mathfrak{q}_{\alpha\tau}(x)$ is defined by
\be
\mathfrak{q}_{\alpha\tau}(i,j)=
\begin{cases}
(j-1)(j-1-\alpha), &\text{if $j>1$},\\
\tau+\alpha(i-1),&\text{if $j=1$}.
\end{cases}
\ee
Functions $\mathfrak{L}_{\lambda}^{PD(\alpha,\tau)}$ form an orthogonal system for lifted Poisson-Dirichlet distribution $\widetilde{PD}(\alpha, \tau)=PD(\alpha, \tau)\otimes\gamma_\tau$.

The proof of Lemma \ref{Lemma} also works under that specialisation, so our approach can be used to get a new proof of the eigenfunction expansion of the transition density in the case of Petrov's diffusion.

\textsc{National Research University Higher School of Economics, Moscow, Russia (until September 2018);}

\textsc{Massachusetts Institute of Technology, Cambridge, MA, United States (since September 2018);}

\textit{E-mail address}: \texttt{shortkih@gmail.com}

\end{document}